\theoremstyle{plain}
\newtheorem{thm}{Theorem}[section]
  \newtheorem*{thm*}{Theorem}
 \newtheorem{cor}[thm]{Corollary}
 \newtheorem{prop}[thm]{Proposition}
 \theoremstyle{definition}
 \theoremstyle{remark}
 \newtheorem{rem}[thm]{Remark}
 \newtheorem{ex}{Example}
 \numberwithin{equation}{section}
\def \Nkt{\mbox{$\mathcal{N}_{\kappa,\tau}$}}
\def \r{\mbox{${\mathbb R}$}}
\def \gt{\mbox{${\widetilde{g}}$}}
\def \N{\mbox{$\mathcal{N}$}}
\def \h{\mbox{${\mathbb H}$}}
\def \E{\mbox{${\mathbb E}$}}
\def \S{\mbox{${\mathbb S}$}}
\def \M{\mbox{$\mathcal{M}$}}
\begin{document}

\title{Bour's theorem and helicoidal surfaces with constant mean curvature in the Bianchi-Cartan-Vranceanu spaces}

\author{Renzo Caddeo}
\address{Universit\`a degli Studi di Cagliari\\
Dipartimento di Matematica e Informatica\\
Via Ospedale 72\\
09124 Cagliari, Italia}

\email{caddeo@unica.it}

\author{Irene I. Onnis}
\address{Universit\`a degli Studi di Cagliari\\
Dipartimento di Matematica e Informatica\\
Via Ospedale 72\\
09124 Cagliari}
\email{irene.onnis@unica.it}

	\author{Paola Piu}\address{Universit\`a degli Studi di Cagliari\\
Dipartimento di Matematica e Informatica\\
Via Ospedale 72\\
09124 Cagliari}
\email{piu@unica.it}

\subjclass{53A10,  53C40,  53C42}
\keywords{Helicoidal surfaces, constant mean curvature surfaces, BCV-spaces, Bour's theorem.}
\thanks{Work supported by Fondazione di Sardegna (Project STAGE) and Regione Autonoma della Sardegna (Project KASBA)}

\begin{abstract}
In this paper we generalize a classical result of Bour concerning  helicoidal surfaces in the three-dimensional Euclidean space  $\r^3$  to the case of helicoidal surfaces in the Bianchi-Cartan-Vranceanu (BCV) spaces, i.e. in the Riemannian $3$-manifolds whose metrics have groups of isometries of dimension $4$ or $6$, except the hyperbolic one. In particular,  we prove that in a BCV-space  there exists a two-parameter family of helicoidal surfaces isometric to a given helicoidal surface; then, by making use of this two-parameter representation, we characterize helicoidal surfaces which have constant mean curvature, including the minimal ones.
\end{abstract}

\maketitle

\section{Introduction and preliminaries}
Helicoidal surfaces in the Euclidean three dimensional space $\r^3$ are invariant under the action of the $1$-parameter group of helicoidal motions and  are a generalization of rotation surfaces. Since the beginning of differential geometry of surfaces much attention has been given to the surfaces of revolution with constant Gauss curvature or constant mean curvature (CMC-surfaces). The surfaces of revolution with constant Gauss curvature seem to  have been known to Minding (1839, \cite{Mi}), while those with constant mean curvature have been classified by Delaunay (1841, \cite{De}). Helicoidal minimal surfaces were studied by Scherk in 1835 (see \cite{S} and, also, \cite{W}), but it is rather recent the classification of the helicoidal surfaces in $\r^3$ with nonzero constant mean curvature, given by Do Carmo and Dajczer in \cite{DD}. \\

The starting point of their work  in \cite{DD} is a result of Bour about helicoidal surfaces in $\r^3$ (see \cite{bour}, p. 82, Theorem II), for which he received the  mathematics prize awarded by the Acad\'{e}mie des Sciences de Paris in 1861\footnote{The problem that sometimes bears the name of Bour was proposed in 1861 by the  Acad\'emie des Sciences and consists in determining all the surfaces that are isometric to a given surface $(M, ds^2)$.
E. Bour  demonstrated that each helicoidal surface is applicable to a surface of revolution, and that the helices on the first surface correspond to the parallels on the second. Bour's work \cite{bour} contains several theorems on ruled and minimal surfaces; but in its printed version this work does not include  the complete integration of the problem's equations in the case of surfaces of revolution; in fact, it is this  result that enabled Bour to win the Academy's grand prix.}.  Bour proved that there exists a $2$-parameter family of helicoidal surfaces isometric to a given helicoidal surface in $\r^3$. For this, firstly he obtained orthogonal parameters $(u,t)$ on a helicoidal surface $M$ for which the families of $u$-coordinate curves are geodesics on $M$ parametrized by arc length, and the $t$-coordinate curves are the trajectories of the helicoidal motion. Such parameters are called {\it natural parameters} and the first fundamental form with respect to them takes the form  $ds^2=du^2+U^2(u)\,dt^2$. Reciprocally, given the natural parameters $(u,t)$ on $M$  and a function $U(u)$, Bour determined a $2$-parameter family of isometric helicoidal surfaces that have induced metric given by $ds^2=du^2+U^2(u)\,dt^2$, that includes rotation surfaces. An exposition of Bour's results about the theory of deformation of surfaces can be found in the Chapter IX  of \cite{da}. \\

By using the result of Bour, in \cite{DD} Do Carmo and Dajczer established a condition for a surface of the Bour's family to have  constant mean curvature. Also they obtained an integral representation (depending on three parameters) of helicoidal surfaces  with nonzero constant mean curvature, that is a natural generalization of the representation for Delaunay surfaces, i.e. CMC rotation surfaces, given by Kenmotsu (see \cite{Ken}). \\

In  \cite{ET} the authors obtain a generalized Bour's theorem for helicoidal surfaces in the products $\h^2\times\r$ and $\S^2\times\r$, and use it to determine all isometric immersions in these spaces that give the surfaces which are helicoidal and have the same constant mean curvature. \\

In regard to  the study of CMC helicoidal surfaces in BCV-spaces, in \cite{FMP} and in \cite{MO1,Onnis} the authors use the equivariant geometry to classify the profile curves of these surfaces in the Heisenberg group $\h_3$ and in $\h^2\times\r$, respectively.  The case of rotational minimal and constant mean curvature surfaces in the Heisenberg group is treated in \cite{RCPPAR1}. J. Ripoll in \cite{ripoll,ripoll2} classified the CMC invariant surfaces in  the $3$-dimensional sphere $\S^3$ and also in the hyperbolic $3$-space $\h^3$.\\


The aim of this paper is to generalize the results obtained in \cite{DD} and \cite{ET}. The paper is organized as follows. Section~\ref{2} is devoted to give a short description of the Bianchi-Cartan-Vranceanu spaces and the helicoidal surfaces in these spaces. In Section~\ref{3} we establish a Bour's type theorem for helicoidal surfaces in the BCV-spaces (see Theorem~\ref{bour}) and, as an immediate consequence of this result, we have that every helicoidal surface in a BCV-space can be isometrically deformed into a rotation surface through helicoidal surfaces. Moreover,       Corollary~\ref{coro1}   refers to the particular case of isometric rotation surfaces. \\

In Section~\ref{4} we use standard techniques of equivariant geometry, in particular the Reduction Theorem of Back, do Carmo and Hsiang (see \cite{docarmo}), to deduce a differential equation that the function $U(u)$ must  satisfy in order  that a helicoidal surface of the Bour's family determined by $U(u)$
has constant mean curvature. We solve this equation by making a transformation of coordinates, treating separately the case of the space forms $\r^3$ and $\S^3$ from the other BCV-spaces. In this way, we obtain Theorem~\ref{principal} that provides a description, in terms of natural parameters, of all helicoidal surfaces of constant mean curvature in a BCV-space, including the minimal ones. We conclude by showing that in $\r^3$  these results give a natural parametrization of all the helicoidal minimal surfaces obtained by Scherk in \cite{S}.

\section{Helicoidal surfaces in Bianchi-Cartan-Vranceanu spaces}\label{2}
A Riemannian manifold $(\M,g)$ is said to be homogeneous if for every two points $p$ and $q$ in $\M$, there exists an isometry of $\M$, mapping $p$ into $q$.  The classification of $3$-dimensional simply connected  homogeneous spaces is well-known and can be summarized as follows. First of all, the dimension of the isometry group must be equal to $6$, $4$ or $3$ (see \cite{Bi} or \cite{Fu}). Then, if the isometry group is of dimension $6$, $\M$ is a complete real space form, i.e. the Euclidean space $\E^3$, a sphere $\mathbb{S}^3(k)$, or a hyperbolic space $\h^3(k)$. If the dimension of the isometry group is $4$, $\M$ is isometric to $\mathrm{SU}(2)$, the special unitary group, to $\widetilde{\mathrm{SL}(2, R)}$, the universal covering of the real special linear group, to $\mathrm{Nil}_3$, the Heisenberg group, all with a certain left-invariant metric, or to a Riemannian product $\mathbb{S}^2(k) \times \r$ or $\h^2(k) \times \r$. Finally, if the dimension of the isometry group is $3$, $\M$ is also isometric to a simply connected Lie group with a left-invariant metric, for example that called  $\mathrm{SOL}$, one of the Thurston's eight models of geometry \cite{scott}.\\

An explicit classification of $3$-dimensional homogeneous Riemannian metrics based on the dimension of their isometry group was first given by Luigi Bianchi in~1897~(see \cite{Bi} or  \cite{Bi1}). Later \'Elie Cartan in \cite{Ca} and Gheorghe Vranceanu in \cite{Vr} proved that all the metrics whose group of isometries has dimension $4$ or $6$, except the hyperbolic one, can be represented in a  concise form by the following two-parameter family of metrics
\begin{equation}\label{1.1}
g_{\kappa,\tau} =\frac{dx^{2} + dy^{2}}{B^{2}} +  \left(dz +
\tau\, \frac{ydx - xdy}{B}\right)^{2},
\end{equation}

for $\kappa, \tau \in \r$, and  $B=1+\dfrac{\kappa}{4}(x^2+y^2)$, $ (x,y,z) \in\r^3$,  positive.  Thus, {\it the family of metrics $g_{\kappa,\tau}$}, that can rightfully be named the {\bf Bianchi-Cartan-Vranceanu metrics} (BCV metrics)  {\it consists of all three-dimensional homogeneous metrics whose group of isometries has dimension $4$ or $6$, except for those of constant negative sectional curvature}. In the following we shall denote by $\N_{\kappa,\tau}$ the open subset of $\r^3$ where the metrics $g_{\kappa,\tau}$ are defined.\\

With respect to \eqref{1.1} we have the following globally defined orthonormal frame
\begin{equation}\label{eq-basis}
E_1= B\frac{\partial}{\partial x}-\tau y \frac{\partial}{\partial z},\quad
E_2=B\frac{\partial}{\partial y}+\tau x \frac{\partial}{\partial z},\quad
E_3=\frac{\partial}{\partial z}
\end{equation}
and, also, 
\begin{prop}[\cite{piu,PiuProfir}]
 The isometry group of  $g_{\kappa,\tau}$ admits the basis of Killing vector fields
\begin{equation}
	\left\{
\begin{aligned}
&X_1=\Big(1-\frac{\kappa\, y^2}{2 B}\Big)\, E_1+ \frac{\kappa\, x y}{2 B}\, E_2+\frac{2 \tau\, y}{ B}\, E_3, \\
&X_2=\frac{\kappa\, x y}{2 B}\, E_1+ \Big(1-\frac{\kappa\, x^2}{2 B}\Big)\, E_2-\frac{2 \tau\, x}{ B}\, E_3,\\
&X_3=-\frac{y}{ B}\, E_1+ \frac{ x}{B}\, E_2-\frac{\tau\, (x^2+y^2)}{ B}\, E_3,\\
&X_4=E_3.
\end{aligned}
\right.
\end{equation}
\end{prop}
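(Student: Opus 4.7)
The plan is to verify the proposition by direct computation in the Cartesian coordinate basis $\{\partial_x,\partial_y,\partial_z\}$. First, I would re-express each $X_i$ in that basis by substituting \eqref{eq-basis}. The key simplifying identity is $2+\kappa(x^2+y^2)/2 = 2B$, which forces a substantial cancellation in the $\partial_z$-components. A short calculation yields the clean forms
$$
X_1 = \Bigl[1+\frac{\kappa(x^2-y^2)}{4}\Bigr]\partial_x + \frac{\kappa xy}{2}\,\partial_y + \tau y\,\partial_z,
$$
a symmetric expression for $X_2$, the remarkably simple $X_3 = -y\,\partial_x + x\,\partial_y$ (the $E_3$-contribution in $X_3$ cancels exactly against the $\partial_z$-contributions coming from $E_1$ and $E_2$), and $X_4 = \partial_z$.

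Next, I would verify the Killing equation $\mathcal{L}_{X_i} g_{\kappa,\tau}=0$ for each $i$. For $X_4=\partial_z$ this is immediate since the metric coefficients are independent of $z$. For $X_3$, I would observe that it is the standard rotational vector field in the $xy$-plane, so it preserves $x^2+y^2$ (hence $B$), preserves $dx^2+dy^2$, and preserves the one-form $y\,dx-x\,dy$; consequently each ingredient of $g_{\kappa,\tau}$ is invariant under its flow. For $X_1$ and $X_2$, I would rewrite the metric as $g_{\kappa,\tau} = B^{-2}(dx^2+dy^2)+\bigl(dz+\tau B^{-1}(y\,dx-x\,dy)\bigr)^2$ and compute $\mathcal{L}_{X_\alpha}$ term by term, using $\partial_x B = \kappa x/2$ and $\partial_y B = \kappa y/2$. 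This is the substantive part of the argument and the main obstacle I expect: the cross-terms between the conformal factor and the $\tau$-twisted connection one-form must combine with the derivatives of the components of $X_\alpha$ to cancel in a rather delicate way, requiring careful bookkeeping (alternatively, one can recognise $X_1, X_2, X_3$ as horizontal lifts of the three infinitesimal isometries of the $2$-dimensional conformal space form of curvature $\kappa$ under the Riemannian submersion $\Nkt\to M^{2}(\kappa)$, and appeal to invariance of the connection one-form, but the coordinate verification is more self-contained).

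Finally, I would check linear independence. The four vector fields are necessarily pointwise dependent at each point (the fibres of $T\Nkt$ are three-dimensional), so independence must be read as $\r$-linear independence as global sections. Evaluating at the origin gives $X_1=\partial_x$, $X_2=\partial_y$, $X_3=0$, $X_4=\partial_z$, so any identically vanishing linear combination $\sum_i a_i X_i\equiv 0$ forces $a_1=a_2=a_4=0$; evaluating the surviving $a_3 X_3$ at $(1,0,0)$ then gives $a_3\,\partial_y=0$, hence $a_3=0$. Since the isometry group of $g_{\kappa,\tau}$ has dimension $4$ by the classification recalled at the beginning of this section, the four linearly independent Killing fields $X_1,X_2,X_3,X_4$ form a basis of the Killing algebra, as claimed.
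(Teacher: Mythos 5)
The paper does not actually prove this proposition: it is imported verbatim from \cite{piu,PiuProfir}, so there is no internal argument to compare yours against, and a direct verification of the kind you outline is the natural thing to supply. The parts you carry out are correct: the coordinate-basis expressions (in particular the cancellation giving $X_3=-y\,\partial_x+x\,\partial_y$ and the $\tau y\,\partial_z$ component of $X_1$), the arguments for $X_3$ and $X_4$, and the $\mathbb{R}$-linear independence all check out. The one substantive reservation is that the step you yourself flag as the main obstacle --- $\mathcal{L}_{X_\alpha}g_{\kappa,\tau}=0$ for $\alpha=1,2$ --- is announced but not performed. It does close, and less delicately than you fear: writing $g_{\kappa,\tau}=B^{-2}(dx^2+dy^2)+\eta^2$ with $\eta=dz+\tau B^{-1}(y\,dx-x\,dy)$, the identity $X_1(B)=\tfrac{\kappa x}{2}\,B$ makes $\mathcal{L}_{X_1}\bigl(B^{-2}(dx^2+dy^2)\bigr)$ vanish outright, since $\mathcal{L}_{X_1}(dx^2+dy^2)=\kappa x\,(dx^2+dy^2)$; and a short computation gives $\mathcal{L}_{X_1}\bigl(B^{-1}(y\,dx-x\,dy)\bigr)=-dy$, which exactly cancels $\mathcal{L}_{X_1}dz=\tau\,dy$ inside $\eta$. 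So your plan succeeds, but in a submitted proof the computation should appear. Two smaller inaccuracies: $X_1,X_2,X_3$ are \emph{not} horizontal lifts (for instance $\eta(X_3)=-\tau(x^2+y^2)/B\neq 0$); they are horizontal lifts of the Killing fields of the base $M^2(\kappa)$ corrected by a vertical term chosen precisely so that $\eta$ is preserved. And your closing appeal to ``the isometry group has dimension $4$'' fails exactly when $\kappa=4\tau^2$, where $\Nkt$ is a space form with a $6$-dimensional isometry group and the four fields span only a proper subalgebra; the proposition should be read as giving a basis of the full Killing algebra only in the genuinely $4$-dimensional cases (and of the distinguished $4$-dimensional subalgebra otherwise).
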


Therefore, the group of isometries of the BCV-spaces contains the helicoidal subgroup, whose infinitesimal generator is the Killing vector field given by
$$
X=-y\frac{\partial}{\partial x}+x\frac{\partial}{\partial y}+a\,\frac{\partial}{\partial z},\qquad a\in\r.
$$
We consider
the surfaces in $\Nkt$ which are invariant under the action of the one-parameter group of isometries $G_X$ of $g_{\kappa,\tau}$ generated by $X$. 
For convenience, we shall introduce cylindrical coordinates  
\begin{equation*}
\left\{
\begin{aligned}
& x=r \cos\theta, \\
& y=r \sin\theta,\\
& z=z,
\end{aligned}
\right.
\end{equation*}
with $r\geq 0$ and $\theta\in(0,2\pi)$. 
In these coordinates the metric \eqref{1.1} becomes 
\begin{equation*}\label{metrica3}
g_{\kappa,\tau} =\frac{dr^{2}}{B^{2}} +r^2\,\Big(\frac{1+\tau^2\, r^2}{B^2}\Big)\,d\theta^2 +dz^2 -2\frac{\tau \,r^2}{B} \,d\theta dz,
\end{equation*}
where $B=1+\dfrac{\kappa}{4}\,r^2$.
Moreover, the  Killing vector field $X$ takes the form
$$X=\frac{\partial}{\partial \theta}+a\,\frac{\partial}{\partial z}$$ and
a set of two invariant functions is
$$\xi_1=r,\qquad
\xi_2=z-a\,\theta.$$
Thus, the orbit space of the action of $G_X$ can be identified with
\begin{equation}\label{OrbitSpace}
\mathcal{B}:=\Nkt/G_X=\{(\xi_1,\xi_2)\in\r^2\;:\;\xi_1\geq 0\}
\end{equation}
and
the orbital distance metric of $\mathcal{B}$ is given by 
	\begin{equation}\label{orbital}
	\gt=  \frac{d\xi_1^2}{B^2}+ \frac{\xi_1^2\,d\xi_2^2}{\xi_1^2+(a\, B-\tau\,\xi_1^2)^2},
	\end{equation}
where $B=1+\dfrac{\kappa}{4}\,\xi_1^2$.\\

Now, consider a helicoidal surface $M$ (with pitch $a$) that, locally, with respect to the cylindrical coordinates, can be parametrized by
\begin{equation}\label{helicoidal}
	\psi(u,\theta)=(\xi_1(u),\theta,\xi_2(u)+a\,\theta),
\end{equation}
and suppose that the profile curve $\tilde{\gamma}(u)=(\xi_1(u),\xi_2(u))$ is parametrized by arc-length in $(\mathcal{B},\gt)$, so that
\begin{equation}\label{ppca}
	\frac{\xi_1'^2}{B^2}+\frac{\xi_1^2\,\xi_2'^2}{\xi_1^2+(a\, B-\tau\,\xi_1^2)^2}=1.
\end{equation}
Therefore from
 \begin{equation*}\label{XpXt_E}
	\left\{
	\begin{aligned}
		&\psi_{u}=\xi_1'\,\Big(\frac{\cos \theta}{B}E_1+\frac{\sin \theta}{B}E_2 \Big) + \xi_2' \,E_3,\\
		& \psi_\theta=\frac{\xi_1}{B}\,(\cos\theta\,E_2-\sin\theta\,E_1)+\bigg(a-\frac{\tau\,\xi_1^2}{B}\bigg)\,E_3=X
	\end{aligned}
	\right.
\end{equation*}
it follows that the coefficients of the induced metric of the helicoidal surface 
are given by

\begin{equation}\label{star}
E(u)=1 + \xi_2'(u)^2\,\bigg(\frac{a\, B(u) - \tau\,\xi_1(u)^2}{B(u)\,\omega(u)}\bigg)^2,\qquad F(u)=\xi_2'(u)\,\bigg(\frac{a\, B(u) - \tau\,\xi_1(u)^2}{B(u)}\bigg),\
\end{equation}

and
$$G=\frac{\xi_1(u)^2}{B(u)^2}+\bigg(a-\frac{\tau\,\xi_1(u)^2}{B(u)}\bigg)^2=\omega(u)^2,$$
where $\omega(u)$ is the volume function of the principal orbit.

\section{A Bour's type theorem}\label{3}
In this section, we show that every helicoidal surface in a BCV-space admits a reparametrization by natural parameters and, conversely, given a positive function $U$, it is possible to find a $2$-parameter family of isometric helicoidal surfaces associate with it that are parameterized by natural parameters. 


\begin{thm}\label{bour}
In the BCV-space $\Nkt$  there exists a two parameter family of helicoidal surfaces that are isometric to a given helicoidal surface of the form~\eqref{helicoidal} and that includes a rotation surface. More precisely, for a given positive function $U(u)$ and arbitrary constants  $m\neq 0$ and $a$, the helicoidal surfaces~\eqref{helicoidal} 
whose profile curve $\tilde{\gamma}(u)=(\xi_1(u),\xi_2(u))$ is given by
\begin{equation}\label{xi-bis}
\left\{
\begin{aligned}
\xi_1(u)&=2\sqrt{\frac{m^2\,U^2-a^2}{(1+\sqrt{\Delta})^2-4\tau^2 m^2U^2}},\\
\xi_2(u)&=\int \frac{m\, U\,(4+\kappa\,\xi_1^2)}{4\,\xi_1^2}\,\sqrt{\xi_1^2-\frac{m^4\, U^2\,U'^2\,(4+\kappa\,\xi_1^2)^2}{16\,\Delta}}\,d{u},
\end{aligned}
\right.
\end{equation}
with
\begin{equation}\label{theta-bis}
\theta(u,t)=\frac{t}{m}+\int \frac{(4\,\tau-a\,\kappa)\,\xi_1^2-4 a}{4m\,U\,\xi_1^2}\,\,\sqrt{\xi_1^2-\frac{m^4\, U^2\,U'^2\,(4+\kappa\,\xi_1^2)^2}{16\,\Delta}}\,d{u},
\end{equation}
where
$$\Delta(u)=(1-2a\,\tau)^2+(m^2\,U(u)^2-a^2)(4\tau^2-\kappa),$$
are all to each other isometric and have  first fundamental form given by 
$du^2+U(u)^2\,dt^2$.
\end{thm}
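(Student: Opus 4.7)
The plan is to establish both directions: first, that any given helicoidal surface admits natural parameters with $ds^{2}=du^{2}+U(u)^{2}\,dt^{2}$; second, that for arbitrary $U, m, a$ the formulas~\eqref{xi-bis}--\eqref{theta-bis} reconstruct a helicoidal surface with that metric. All members of the resulting family share the same first fundamental form, so pairwise isometry is automatic in the $(u,t)$-coordinates.

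For the forward direction, I start from $\psi(u,\theta)$ as in~\eqref{helicoidal} with the profile parametrized by arclength in $(\mathcal{B},\gt)$, so that~\eqref{ppca} holds and the induced-metric coefficients are those of~\eqref{star}. A short computation using~\eqref{ppca} yields the identity $E - F^{2}/G = 1$. The change of variable $\theta = t/m + h(u)$, with $m\neq 0$ constant and $h$ to be determined, gives
$$ds^{2}=\bigl(E+2Fh'+Gh'^{2}\bigr)\,du^{2}+\tfrac{2}{m}\bigl(F+Gh'\bigr)\,du\,dt+\tfrac{G}{m^{2}}\,dt^{2}.$$
Choosing $h'(u)=-F/G$ kills the cross term; the identity $E-F^{2}/G=1$ then forces the $du^{2}$ coefficient to equal $1$, and setting $U(u)=\omega(u)/m$ delivers the metric $du^{2}+U(u)^{2}\,dt^{2}$. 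Varying $m\neq 0$ and $a\in\r$ produces the claimed two-parameter family; the value $a=0$ corresponds to a rotation surface.

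For the converse, fix $U>0$, $m\neq 0$ and $a\in\r$. The requirement $\omega=mU$, multiplied by $B^{2}$, reads
$$m^{2}U^{2}B^{2}=\xi_{1}^{2}+(aB-\tau\xi_{1}^{2})^{2},$$
which is quadratic in $x=\xi_{1}^{2}$. Expanding and computing its discriminant (a patient but direct calculation) identifies it as $256\,\Delta$, with $\Delta=(1-2a\tau)^{2}+(m^{2}U^{2}-a^{2})(4\tau^{2}-\kappa)$; rationalizing the root and picking the branch with $\xi_{1}\to 0$ as $m^{2}U^{2}\to a^{2}$ yields the formula for $\xi_{1}(u)$ in~\eqref{xi-bis}. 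Implicit differentiation of $m^{2}U^{2}B^{2}=\xi_{1}^{2}+(aB-\tau\xi_{1}^{2})^{2}$, combined with the closed form just obtained for $\xi_{1}^{2}$, collapses after a nontrivial algebraic reduction to
$$\xi_{1}'(u)=\frac{m^{2}\,U(u)\,U'(u)\,B(u)^{2}}{\xi_{1}(u)\,\sqrt{\Delta(u)}}.$$
Substituting this back into~\eqref{ppca} and solving for $\xi_{2}'$ reproduces the integrand in the formula for $\xi_{2}$ in~\eqref{xi-bis}. Finally, $h'(u)=-F/G=-\xi_{2}'(u)(aB-\tau\xi_{1}^{2})/(Bm^{2}U^{2})$; together with the identity $-4(aB-\tau\xi_{1}^{2})=(4\tau-a\kappa)\xi_{1}^{2}-4a$ and the just-found formula for $\xi_{2}'$, this gives precisely the integrand for $\theta(u,t)$ in~\eqref{theta-bis}.

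The main obstacle is the algebraic simplification in the converse direction: recognizing the discriminant of the quadratic in $\xi_{1}^{2}$ as exactly $256\,\Delta$, and reducing the implicit derivative of $\xi_{1}$ to the clean factor $1/\sqrt{\Delta}$. Both manipulations are routine but non-obvious; once in hand, the closed-form expressions~\eqref{xi-bis}--\eqref{theta-bis} fall out by substitution, and the resulting surfaces carry the metric $du^{2}+U(u)^{2}\,dt^{2}$ by construction.
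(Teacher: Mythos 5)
Your argument is correct and follows essentially the same route as the paper: kill the cross term with $h'=-F/G$ and use $E-F^2/G=1$ for the forward direction, then impose $\omega=mU$, solve the resulting quadratic for $\xi_1^2$ (the paper phrases the same quadratic in terms of $\|\nabla(1/\xi_1)\|=1/\xi_1^2+\kappa/4$, which is why its discriminant comes out as $\Delta$ rather than your rescaled $256\,\Delta$), differentiate to get $m^2B^2UU'=\sqrt{\Delta}\,\xi_1\xi_1'$, and recover $\xi_2'$ and $\theta$ from the arclength condition. The only substantive point the paper adds is deriving the \emph{constancy} of $m$ from the integrability condition $\partial^2\theta/\partial t\,\partial u=0$, so that the two-parameter family is exhaustive rather than merely constructed.
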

\begin{proof}
From \eqref{star} we have that the induced metric of a helicoidal surface~\eqref{helicoidal}, with pitch $a_0$, is given  by
\begin{equation}\label{metric}
\begin{aligned} g_\psi&=E(u)\,du^2+2\,F(u)\,du\,d\theta+\omega(u)^2\,d\theta^2\\
&=du^2+ \omega(u)^2\,\bigg(d\theta+\xi_2'(u)\,\frac{a_0\, B(u) - \tau\,\xi_1(u)^2}{B(u)\, \omega(u)^2}\,du\bigg)^2,
\end{aligned}
\end{equation}
where
\[
\omega(u)^2 = \frac{\xi_1(u)^2}{B(u)^2}+\bigg(\frac{a_0 \, B(u) -\tau\,\xi_1(u)^2}{B(u)}\bigg)^2 .
\]
Now  we introduce a new parameter $t=t(u,\theta)$ that satisfies: 
\begin{equation}\label{eqgeod-bis}
dt=d\theta+\xi_2'(u)\,\frac{a_0\, B(u) - \tau\,\xi_1(u)^2}{B(u)\, \omega(u)^2}\,du.
\end{equation} 
As the Jacobian $|\partial (u,t)/\partial(u,\theta)|$ is equal to $1$, it follows that $(u,t)$ are local coordinates on a helicoidal surface $M$ and, also, that we can write \eqref{metric} as 
\begin{equation}\label{metric1}
g_\psi=du^2+\omega(u)^2\,dt^2.
\end{equation}
We now observe that the $u$-coordinate curves
are parametrized by arc length and also that
are geodesics of $M$ (see \cite{MO2}) which are orthogonal to the $t$-coordinate curves, i.e. the helices. Consequently, the local parametrization $\psi(u,\theta(u,t))$ is a {\it natural} parametrization of the helicoidal surface $M$.\\

Conversely, given a function $U(u)>0$, we want to determine functions $\theta,\xi_1,\xi_2$ of $(u,t)$ such that
\begin{equation}\label{viceversa}
\left\{\begin{aligned}
du^2&=\frac{d\xi_1^2}{B^2}+ \frac{\xi_1^2\,d\xi_2^2}{\xi_1^2+(a\, B-\tau\,\xi_1^2)^2},\\
\pm U(u)\,dt&=\frac{\sqrt{\xi_1^2+(a\,B-\tau\,\xi_1^2)^2}}{B}\,
\bigg[d\theta+\frac{B(a\,B-\tau\,\xi_1^2)}{\xi_1^2+(a\,B-\tau\,\xi_1^2)^2}\,d\xi_2\bigg],
\end{aligned}\right.
\end{equation} 
where $B=1+\dfrac{\kappa}{4}\,\xi_1^2$.\\

From the first equation of \eqref{viceversa} we have that $\xi_i=\xi_i(u)$, $i=1,2$. Then, from the second, we obtain
\begin{equation}\label{viceversa1}
\left\{\begin{aligned}
\frac{\partial \theta}{\partial u}&=-\frac{B(u)\,[a\,B(u)-\tau\,\xi_1^2(u)]}{\xi_1^2(u)+(a\,B(u)-\tau\,\xi_1^2(u))^2}\,\xi_2'(u),\\
\frac{\partial \theta}{\partial t}&=\pm\frac{B(u)\,U(u)}{\sqrt{\xi_1^2(u)+(a\,B(u)-\tau\,\xi_1^2(u))^2}},
\end{aligned}\right.
\end{equation} 
where $B(u)=1+\dfrac{\kappa}{4}\,\xi_1^2(u)$.\\
Therefore, $$\frac{\partial^2 \theta}{\partial t\partial u}=0$$ and hence there exists a constant $m\neq 0$
such that 
\begin{equation}\label{m}
\pm \frac{B(u)\,U(u)}{\sqrt{\xi_1^2(u)+(a\,B(u)-\tau\,\xi_1^2(u))^2}}=\frac{1}{m}.
\end{equation}
Thus the second equation of system~\eqref{viceversa} becomes
\begin{equation}\label{d}
d\theta=\frac{dt}{m}-\frac{B(u)\,(a\,B(u)-\tau\,\xi_1^2(u))}{\xi_1^2(u)+(a\,B(u)-\tau\,\xi_1^2(u))^2}\,d\xi_2.
\end{equation}

If we consider the function $f:=1/\xi_1$,  equation~\eqref{m} can be written as
$$(a^2-m^2\, U^2)\,\|\nabla f\|_{\kappa,\tau} ^2+(1-2 a \,\tau)\,\|\nabla f\|_{\kappa,\tau}+\tau^2-\kappa/4=0$$
and, therefore,
$$\|\nabla f\|_{\kappa,\tau}=\frac{1-2a\,\tau+\sqrt{\Delta}}{2\,(m^2\,U^2-a^2)},$$
with $$\Delta=(1-2a\,\tau)^2+(m^2\,U^2-a^2)(4\tau^2-\kappa).$$
As $\|\nabla f\|_{\kappa,\tau}=f^2+\kappa/4$, we conclude that
\begin{equation}\label{xi1}
\xi_1^2=\frac{4(m^2\,U^2-a^2)}{(1+\sqrt{\Delta})^2-4\tau^2m^2U^2}.
\end{equation}
Then, differentiating \eqref{m} and using \eqref{xi1}, we get
\begin{equation}\label{radice-delta}
m^2\,B^2\,U\,U'=\sqrt{\Delta}\,\xi_1\,\xi_1'
\end{equation}
and hence
\begin{equation}\label{x1B}
\frac{(\xi_1')^2}{B^2}=\frac{m^4\,B^2\, U^2\,U'^2}{\xi_1^2\,\Delta}.
\end{equation}
Therefore,  taking into account the first equation of system~\eqref{viceversa},  we obtain
$$\begin{aligned}
d\xi_2^2&=\frac{m^2\,B^2\,U^2}{\xi_1^2}\,\bigg(1-\frac{\xi_1'^2}{B^2}\bigg)d{u}^2\\
&=\frac{m^2\,B^2\,U^2}{\xi_1^4}\,\bigg(\xi_1^2-\frac{m^4\,B^2\, U^2\,U'^2}{\Delta}\bigg)d{u}^2.
\end{aligned}$$
Thus,  as 
$$
B=1+\frac{\kappa}{4}\,\xi_1^2,
$$
 it turns out that
\begin{equation}\label{xi2}
\xi_2(u)=\int \frac{m\, U\,(4+\kappa\,\xi_1^2)}{4\,\xi_1^2}\,\sqrt{\xi_1^2-\frac{m^4\, U^2\,U'^2\,(4+\kappa\,\xi_1^2)^2}{16\,\Delta}}\,d{u}.
\end{equation}
Also,  from \eqref{d}  we have
\begin{equation}\label{theta}
\theta(u,t)=\frac{t}{m}+\int \frac{(4\,\tau-a\,\kappa)\,\xi_1^2-4 a}{4m\,U\,\xi_1^2}\,\,\sqrt{\xi_1^2-\frac{m^4\, U^2\,U'^2\,(4+\kappa\,\xi_1^2)^2}{16\,\Delta}}\,d{u}.
\end{equation}
Consequently,  the natural parametrization of the helicoidal surface \eqref{helicoidal} with given first
fundamental form $g_\psi=du^2+U(u)^2\,dt^2$  can be calculated by means of equations~\eqref{xi1},  \eqref{xi2} and \eqref{theta}.
\end{proof}

\begin{rem}
If $\kappa=0=\tau$, the BCV-space is the Euclidean space $\r^3$ and  Theorem~\ref{bour} becomes the classical one (\cite{bour}, p.~82, Theorem~II) due to Bour. In fact, in this case the functions $B$ and $\Delta$ are constant and equal to $1$; thus we obtain
\begin{equation}\label{r3}
\left\{\begin{aligned}
\xi_1(u)&=\sqrt{m^2\,U^2-a^2},\\
\xi_2(u)&=\int \frac{m\, U}{m^2\,U^2-a^2}\,\sqrt{m^2\,U^2-a^2-m^4\, U^2\,U'^2}\,d{u},\\
\theta(u,t)&=\frac{t}{m}-\frac{a}{m}\int \frac{\sqrt{m^2\,U^2-a^2-m^4\, U^2\,U'^2}}{U\,(m^2\,U^2-a^2)}\,d{u}.
\end{aligned}
\right.
\end{equation}
\end{rem}

\begin{rem}
The family of helicoidal surfaces $\Psi(u,t):= \psi_{[U,m,a]}(u,t)$  in the BCV-space $\Nkt$ obtained in the Theorem~\ref{bour} depends on two parameters $m\neq 0$ and $a$, and for $m=1$ and $a=a_0$ it contains the original helicoidal surface.  Also, when $m=1$ and $a=0$,  we obtain  a rotational surface isometric to the given helicoidal surface. Therefore, by varying the constant $a$ from $a=0$ to $a=a_0$, we get an isometric deformation from a rotational surface to a given helicoidal surface.
\end{rem}


\begin{ex}\label{ex1}
In $\r^3$, we consider the function $U(u)=\sqrt{u^2+d^2}$, $d \in \r$, $d^2 \geq a^2$. If we suppose $m=1$,  from the formulas~\eqref{r3}
we obtain that
$$
\left\{\begin{aligned}
\xi_1(u)&=\sqrt{u^2+d^2-a^2},\\
\xi_2(u)&=\sqrt{d^2-a^2}\int \frac{\sqrt{u^2+d^2}}{u^2+d^2-a^2}\,d{u},\\
\theta(u,t)&=t-a\, \sqrt{d^2-a^2}\int \frac{du}{\sqrt{u^2+d^2}\, [u^2+d^2-a^2]}.
\end{aligned}
\right.
$$
By varying $a$ from $a=0$ to $a=d$ we have the classical isometric deformation of the catenoid $$\psi_{[U,1,0]}(u,t)=\bigg(\sqrt{u^2+d^2}\, \cos t,\sqrt{u^2+d^2}\,\sin t, d\cosh^{-1} \Big(\sqrt{\frac{u^2}{d^2}+1}\Big)\bigg)$$ into the helicoid $\psi_{[U,1,d]}(u,t)=(u \cos t,u \sin t, t+b)$, $b\in\r$, that are minimal surfaces.
Also, the intermediate helicoidal surfaces are all minimal  and  their natural parametrization is given by
$$\left\{\begin{aligned}
\xi_1(u)&=\sqrt{u^2+d^2-a^2},\\
\xi_2(u)&=\sqrt{d^2 -a^2}\,\cosh^{-1} \Big(\sqrt{\frac{u^2}{d^2}+1}\Big)+ 
   a\,\arctan \Big(\frac{a\, u}{\sqrt {d^2 - a^2}\,
           \sqrt{u^2+d^2}}\Big),\\
           \theta(u,t)&=t-\arctan \Big(\frac{a\, u}{\sqrt {d^2 - a^2}\,
           \sqrt{u^2+d^2}}\Big), \qquad 0<a<d. 
           \end{aligned}
\right.$$
Such surfaces are also called second  Scherk's surfaces (see \cite{hi} and, also, the Example~\ref{ex3}). 
\end{ex}

\begin{ex}\label{ex-2}
In the Heisenberg space $\h_3$ equipped with the metric $g_{\kappa,\tau}$ with $\kappa=0$ and $\tau=1/2$, we consider the function $U(u)=(u^2+2)/2$. If we suppose that $m=1$, from  formulas \eqref{xi-bis} we get
\begin{equation}\label{for-cat}
\left\{\begin{aligned}
\xi_1(u)&=\sqrt{\sqrt{u^4+4u^2+8\,(1-a)}+2\,(a-1)},\\
\xi_2(u)&=\int \frac{(2+u^2)}{2\, \xi_1(u)^2}\,\sqrt{\xi_1(u)^2-\frac{u^2\,(u^2+2)^2}{u^4+4u^2+8\,(1-a)}}\,d{u},\\
\theta(u,t)&=t+\int \frac{ \xi_1(u)^2-2 a}{ (u^2+2)\,\xi_1(u)^2}\,\,\sqrt{\xi_1(u)^2-\frac{u^2\,(u^2+2)^2}{u^4+4u^2+8\,(1-a)}}\,d{u}.
\end{aligned}
\right.
\end{equation}
In particular, for $a=1/2$ we obtain the curve
$$
\tilde{\gamma}(u)=(\sqrt{u^2+1},(u+\arctan u) /2),
$$
the profile curve of  the {\em helicoidal catenoid}, that is a helicoidal minimal surface (see \cite{FMP}), parametrized by
$$
\psi(u,\theta)=\Big(\sqrt{u^2+1}\,\cos \theta,\sqrt{u^2+1}\,\sin \theta,\frac{u+\theta+\arctan{u}}{2}\Big).
$$
Also, as $$\theta(u,t)=t-\arctan u+\sqrt{2}\arctan\Big(\frac{u}{\sqrt{2}}\Big),$$ 
we have that  the parametrization
$$\begin{aligned}
\Psi(u,t)&=\psi(u,\theta(u,t))=\bigg(\cos\Big(t+\sqrt{2}\arctan\Big(\frac{u}{\sqrt{2}}\Big)\Big)+u\sin\Big(t+\sqrt{2}\arctan\Big(\frac{u}{\sqrt{2}}\Big)\Big),\\
& \sin\Big(t+\sqrt{2}\arctan\Big(\frac{u}{\sqrt{2}}\Big)\Big)-u\sin\Big(t+\sqrt{2}\arctan\Big(\frac{u}{\sqrt{2}}\Big)\Big),\frac{1}{2} \Big(u+t+\sqrt{2}\arctan\Big(\frac{u}{\sqrt{2}}\Big)
\Big)\bigg)
\end{aligned}$$
represents a natural parametrization of the helicoidal catenoid with
$$g_{\Psi}=du^2+U(u)^2\,dt^2.$$
Now, if we start from $a=1/2$ and in the equations~\eqref{for-cat} we consider all the decreasing values of $a$ in the interval $[0,1/2]$,
  we obtain an isometric deformation of the helicoidal catenoid into a rotational surface (obtained for $a=0$), through helicoidal surfaces  parametrized by natural parameters.

  \begin{figure}[h!]\label{cat-defo}
  \begin{center}
\begin{tabular}{cc}
 \includegraphics[height=1.7in]{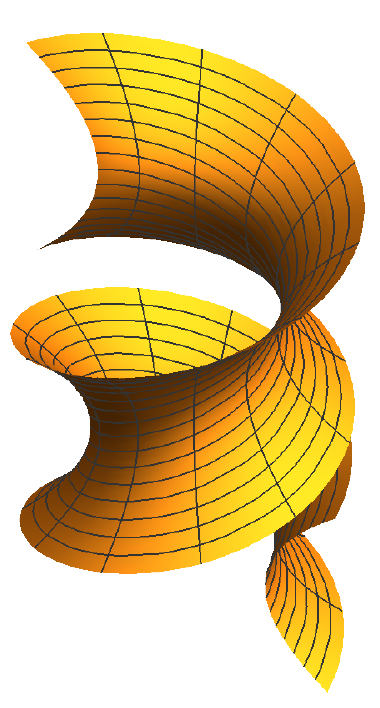} & \hspace{1.5cm}
 \includegraphics[height=1.4in]{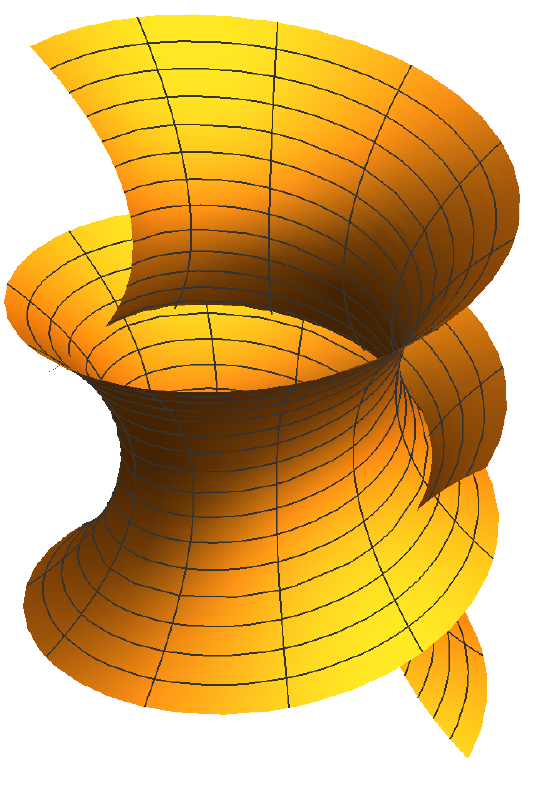} \\
 $a=1/2$ & \hspace{1.5cm} $a=1/4$ \\
 \\
 \includegraphics[height=1.4in]{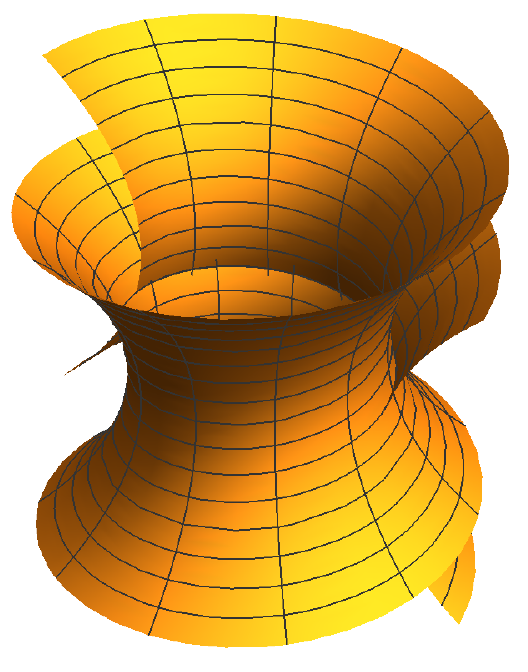} & \hspace{1.5cm}
 \includegraphics[height=1.3in]{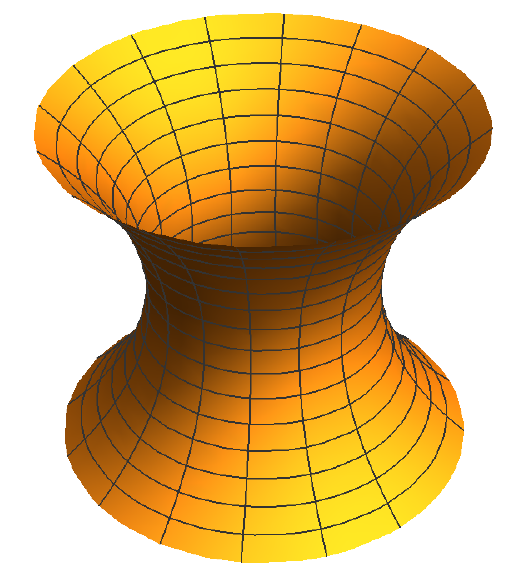} \\
 $a=1/8$ & \hspace{1.5cm} $a=0$ \\ \\
  \end{tabular}
  \caption{Isometric deformation of the helicodal catenoid into a rotation surface in $\h_3$. The surfaces in the picture are obtained for $a=1/2, 1/4, 1/8$ and $a=0$, respectively; only that with angular pitch $a=1/2$ is a minimal surface (see Remark~\ref{rem-b}).}
\end{center}
\end{figure} 

\end{ex}

\begin{cor}\label{coro1}
 The rotation surfaces given by $\psi_{[U,n,0]}(u,t)$, with $n\neq 0$, give rise in the BCV-space $\Nkt$ to  a $1$-parameter family of isometric surfaces  that are also isometric to the helicoidal surfaces $\psi_{[U,m,a]}(u,t)$. This family is determined by the formulas:
\begin{equation}\left\{\begin{aligned}\label{general}
\xi_1(u)&=\frac{2\, n\,U}{\sqrt{2(1+\sqrt{\Delta})-\kappa\, n^2 \,U^2}},\\
\xi_2(u)&=\int\sqrt{\frac{ (1+\sqrt{\Delta})^2}{2(1+\sqrt{\Delta})-\kappa\, n^2\, U^2}-\frac{n^2\,(1+\sqrt{\Delta})^4\,U^2}{\Delta\,[2(1+\sqrt{\Delta})-\kappa\, n^2\, U^2]^2}}\,d{u},\\
\theta(u,t)&=\frac{t}{n}+2\tau\int\sqrt{\frac{1}{2(1+\sqrt{\Delta})-\kappa\, n^2\, U^2}-\frac{n^2\,(1+\sqrt{\Delta})^2\,U'^2}{\Delta\,[2(1+\sqrt{\Delta})-\kappa\, n^2\, U^2]^2}}\,d{u},
\end{aligned}
\right.\end{equation}
where
$\Delta=1+(4\tau^2-\kappa)\,n^2\, U^2$. In particular,
\begin{enumerate}
\item if $\kappa=4\tau^2\neq 0$, i.e. $\Nkt\cong \S^3(\tau^2)$, the equations~\eqref{general} become
$$\left\{\begin{aligned}
\xi_1(u)&=\frac{n\,U}{\sqrt{1-\tau^2\,n^2\, U^2}},\\
\xi_2(u)&=\int\frac{\sqrt{1-n^2\,(\tau^2\, U^2+U'^2)}}{1-\tau^2\,n^2\,U^2}\,d{u},\\
\theta(u,t)&=\frac{t}{n}+\tau\int\frac{\sqrt{1-n^2\,(\tau^2\, U^2+U'^2)}}{1-\tau^2\,n^2\,U^2}\,d{u},
\end{aligned}
\right.$$

\item if $\tau=0$, i.e. $\Nkt\cong \r^3$, $\Nkt\cong \S^2(\kappa)\times\r$ or $\Nkt\cong \h^2(\kappa)\times\r$, the equations~\eqref{general}  become
$$\left\{\begin{aligned}
\xi_1(u)&=\frac{2n\,U}{1+\sqrt{1-\kappa\,n^2\, U^2}},\\
\xi_2(u)&=\int\sqrt{\frac{1-n^2\,(\kappa\, U^2+U'^2)}{1-\kappa\,n^2\,U^2}}\,d{u},\\
\theta(u,t)&=\frac{t}{n},
\end{aligned}
\right.$$
\item if $\kappa =0$ and $\tau\neq 0$, i.e. $\Nkt\cong \h_3$, the equations~\eqref{general} become
$$\left\{\begin{aligned}
\xi_1(u)&=\frac{\sqrt{2}\, n\,U}{\sqrt{1+\sqrt{\Delta}}},\qquad \Delta=1+4\tau^2\,n^2\, U^2,\\
\xi_2(u)&=\int \frac{1+\sqrt{\Delta}}{2}\,\sqrt{\frac{2}{1+\sqrt{\Delta}}-\frac{n^2\,U'^2}{\Delta}}\,d{u},\\
\theta(u,t)&=\frac{t}{n}+\tau\int\sqrt{\frac{2}{1+\sqrt{\Delta}}-\frac{n^2\,U'^2}{\Delta}}\,d{u},
\end{aligned}
\right.$$
\item if $\kappa \neq 0$ and $\tau\neq 0$,  the equations~\eqref{general} give the family of rotation surfaces in the cases $\Nkt\cong\mathrm{SU}(2)$ and $\Nkt\cong\widetilde{\mathrm{SL}(2, \r)}$.
\end{enumerate}
\end{cor}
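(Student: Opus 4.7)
The idea is to view Corollary~\ref{coro1} as a direct specialization of Theorem~\ref{bour} to the case $a=0$. Since Theorem~\ref{bour} asserts that, in its natural parameters, every surface $\psi_{[U,m,a]}$ has first fundamental form $du^2+U(u)^2\,dt^2$ independently of $(m,a)$, the identity in the $(u,t)$-coordinates is an isometry between any two members of the family. Taking $a=0$ and letting $m=n\neq 0$ vary therefore produces a one-parameter family of rotation surfaces $\psi_{[U,n,0]}$ that are mutually isometric and are isometric to every helicoidal surface $\psi_{[U,m,a]}$, which settles the qualitative part of the statement without further work.

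For the explicit parametrization~\eqref{general}, I would substitute $a=0$ and $m=n$ into \eqref{xi-bis}--\eqref{theta-bis}. The discriminant collapses to $\Delta=1+(4\tau^2-\kappa)\,n^2\,U^2$, and using the elementary identity
\[
(1+\sqrt{\Delta})^2-4\tau^2\,n^2\,U^2=1+2\sqrt{\Delta}+\Delta-4\tau^2 n^2 U^2=2(1+\sqrt{\Delta})-\kappa\,n^2\,U^2=:D,
\]
the expression for $\xi_1$ in \eqref{xi-bis} reduces at once to $\xi_1=2nU/\sqrt{D}$. Once this is in hand, the two auxiliary identities $\xi_1^2=4n^2 U^2/D$ and $4+\kappa\,\xi_1^2=8(1+\sqrt{\Delta})/D$ rewrite the common square-root factor of \eqref{xi-bis}--\eqref{theta-bis} as
\[
\sqrt{\xi_1^2-\frac{n^4\, U^2\, U'^2\,(4+\kappa\xi_1^2)^2}{16\,\Delta}}=\frac{2nU}{D}\,\sqrt{D-\frac{n^2\,(1+\sqrt{\Delta})^2\,U'^2}{\Delta}}.
\]
Combining this with the prefactor $nU(4+\kappa\xi_1^2)/(4\xi_1^2)$ appearing in $\xi_2$, and with the simplification $(4\tau-a\kappa)\xi_1^2-4a\big|_{a=0}=4\tau\,\xi_1^2$ in $\theta$, yields after elementary algebra the integrands displayed in~\eqref{general}.

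For the four particular cases, I would simply specialize $(\kappa,\tau)$ in~\eqref{general}. When $\kappa=4\tau^2\neq 0$, the discriminant collapses to $\Delta=1$, so $\sqrt{\Delta}=1$ and $D=4(1-\tau^2\,n^2\,U^2)$, giving case~(1). When $\tau=0$, one has $\Delta=1-\kappa\,n^2\,U^2$ and the identity $D=(1+\sqrt{\Delta})^2$ simplifies $\xi_2$, while the $2\tau$ factor in the integrand of $\theta$ forces $\theta=t/n$; this is case~(2). The remaining cases~(3) and~(4) are nothing but~\eqref{general} with $\kappa=0$ and with $\kappa\tau\neq 0$, respectively, and require no further reduction. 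The only real difficulty is the algebraic bookkeeping in the passage from \eqref{xi-bis}--\eqref{theta-bis} to \eqref{general}; once the shorthand $D$ and the identity $4+\kappa\,\xi_1^2=8(1+\sqrt{\Delta})/D$ are introduced, every subsequent step is routine.
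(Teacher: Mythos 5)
Your proposal is correct and follows exactly the route the paper intends: Corollary~\ref{coro1} is obtained by substituting $a=0$ and $m=n$ into \eqref{xi-bis}--\eqref{theta-bis} and simplifying by means of the identity $(1+\sqrt{\Delta})^2-4\tau^2n^2U^2=2(1+\sqrt{\Delta})-\kappa\,n^2U^2$ together with $4+\kappa\,\xi_1^2=8(1+\sqrt{\Delta})/[2(1+\sqrt{\Delta})-\kappa\,n^2U^2]$, and the paper supplies no separate argument. Your computation (which is confirmed by the special cases (1)--(3)) also shows that the factor $U^2$ appearing in the second term under the square root in the $\xi_2$ integrand of \eqref{general} should read $U'^2$; this is a typo in the paper's statement, not an error in your derivation.
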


From the formula for the Gaussian curvature of an invariant surface obtained in \cite{MO}, it follows that  the helicoidal surfaces of the Bour's family in the BCV-space $\Nkt$ have all  the same Gaussian curvature given by
$$K(u)=-\frac{U''(u)}{U(u)}.$$ With regard to the mean curvature $H$ of these surfaces, in  next section we  shall see that  different values  of $a$ and $m$ can give rise to different values of $H$.
\section{Helicoidal surfaces of constant mean curvature}\label{4}
In this section, we will describe the helicoidal surfaces in the BCV-spaces that have the same constant mean curvature.
We start by computing the mean curvature of a helicoidal surface \eqref{helicoidal}. It turns out that the mean curvature of an invariant immersion is tightly  related to the
geodesic curvature of the profile curve, as shown by the remarkable following theorem. But first we recall that 
if  on 
 a three-dimensional connected Riemannian manifold $({N}^3,g)$ we consider   the $1$-parameter subgroup $G_X$ of isometries generated by $X$, an orbit $G(p)$ of $p \in N$ is called {\it principal} if there exists an open neighborhood  $U\subset N$ of $p$ such that all orbits $G(q)$, $q\in U$, are of the same type as $G(p)$ (i.e.  the isotropy subgroups $G_q$ and $G_p$ are coniugated). This implies that $G(q)$  is
diffeomorphic to $G(p)$. We denote with $N_r$ the {\it regular part} of $N$, that is, the subset consisting of points belonging to principal orbits \cite{alexandrino}. Then we have

\begin{thm}[Reduction Theorem \cite{docarmo}]\label{ridu}
Let $H$ be the mean curvature of a $G_X$-invariant surface $M_r\subset N_r$ and $k_g$ the
geodesic curvature of the profile curve $M_r/G_X\subset
\mathcal{B}_r$. Then
$$
H(x)=k_g(\pi(x))-D_{\textbf{n}}\ln \omega(\pi(x)), \qquad x\in M_r,
$$
where $\textbf{n}$ is the unit normal  of  the profile curve and
$\omega=\sqrt{g(X,X)}$ is the volume function of the principal
orbit.
\end{thm}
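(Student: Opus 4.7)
The plan is to parametrize $M_r$ by horizontally lifting the unit-speed profile curve from the orbit space and then sweeping it by the flow of $X$. Concretely, let $\tilde{\gamma}(u)$ parametrize the profile curve by arc length in $(\mathcal{B}_r,\tilde g)$ and let $\sigma(u)$ be a horizontal lift to $N_r$, so that $\pi\circ\sigma=\tilde\gamma$ and $\sigma'(u)$ is orthogonal to the $G_X$-orbits. Setting $\psi(u,\theta)=\Phi_\theta(\sigma(u))$, where $\Phi_\theta$ denotes the flow of $X$, and using that each $\Phi_\theta$ is an isometry, $\psi_u$ stays horizontal and of unit norm, $\psi_\theta=X$ is vertical of length $\omega$, and the induced metric is $du^2+\omega^2\,d\theta^2$. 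The unit normal $\nu$ of $M$ in $N$ is then the unique horizontal unit vector orthogonal to $\psi_u$; in particular, $d\pi(\nu)$ is the unit normal $\mathbf{n}$ of $\tilde\gamma$ in $\mathcal{B}_r$.

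The next step is to compute the two relevant coefficients of the second fundamental form. For $\mathrm{II}(\psi_u,\psi_u)=\langle\nabla_{\psi_u}\psi_u,\nu\rangle$, I would invoke O'Neill's formula for the Riemannian submersion $\pi\colon N_r\to\mathcal{B}_r$: since $\psi_u$ is the horizontal lift of $\tilde\gamma'$, the horizontal component of $\nabla_{\psi_u}\psi_u$ is the horizontal lift of $\tilde\nabla_{\tilde\gamma'}\tilde\gamma'=k_g\,\mathbf{n}$, namely $k_g\,\nu$. Hence $\mathrm{II}(\psi_u,\psi_u)=k_g$. For $\mathrm{II}(X,X)=\langle\nabla_X X,\nu\rangle$, I would apply the Killing identity $\nabla_X X=-\tfrac12\nabla|X|^2=-\tfrac12\nabla\omega^2$, which gives $\mathrm{II}(X,X)=-\omega\,\nu(\omega)$. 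Because $\omega=\sqrt{g(X,X)}$ is $G_X$-invariant it descends to a function on $\mathcal{B}_r$, and horizontality of $\nu$ yields $\nu(\omega)=\mathbf{n}(\omega)=D_{\mathbf{n}}\omega$; therefore $\mathrm{II}(X,X)=-\omega^2\,D_{\mathbf{n}}\ln\omega$.

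Finally I would combine these pieces through the orthogonal-parameter formula for the mean curvature (in the convention of the statement, $H=\mathrm{II}(\psi_u,\psi_u)/E+\mathrm{II}(X,X)/G$), with $E=1$ and $G=\omega^2$, to obtain
\[
H=k_g+\frac{-\omega^2\,D_{\mathbf{n}}\ln\omega}{\omega^2}=k_g-D_{\mathbf{n}}\ln\omega,
\]
as claimed. The main point requiring care is the Riemannian submersion setup: on the regular part $N_r$ the action is locally free, so $\pi\colon N_r\to\mathcal{B}_r$ is indeed a submersion and the orbital distance metric $\tilde g$ is the associated submersion metric, and the horizontal lift $\sigma$ exists along $\tilde\gamma$. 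These are standard facts of equivariant geometry; once they are in place, the proof reduces to the direct combination of O'Neill's formula with the Killing identity described above.
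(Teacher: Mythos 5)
The paper does not actually prove Theorem~\ref{ridu}: it is quoted as the Reduction Theorem of Back, do Carmo and Hsiang, with the proof delegated to \cite{docarmo}. So there is no internal argument to compare yours against; judged on its own, your proof is correct and is essentially the standard derivation. The decomposition into a horizontal unit direction $\psi_u$ (the basic lift of $\tilde\gamma'$) and the vertical direction $X$ gives an orthogonal frame with $E=1$, $F=0$, $G=\omega^2$; O'Neill's lemma identifies the horizontal part of $\nabla_{\psi_u}\psi_u$ with the lift of $k_g\,\mathbf{n}$, and the Killing identity $\nabla_XX=-\tfrac12\nabla|X|^2$ handles the vertical block, yielding $H=k_g-D_{\mathbf{n}}\ln\omega$ with the unnormalized (trace) convention for $H$ — which is indeed the convention the paper uses, as one can check against equation~\eqref{h} (a cylinder of radius $R$ in $\r^3$ gives $H=1/R$ there). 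Two small points worth making explicit if you write this up: (i) to invoke O'Neill you should extend $\psi_u$ to a basic horizontal field on a neighborhood (or phrase the step as the standard fact that the horizontal component of the covariant derivative of a horizontal lift along a horizontal lift projects to the covariant derivative downstairs), and (ii) the identity holds with the orientation convention that $\nu$ is the horizontal lift of the normal $\mathbf{n}$ used to define $k_g$; choosing the opposite normal flips the sign of both sides, so the statement is consistent, but the two choices must be tied together as you do. With those caveats the argument is complete and matches the cited source's method.
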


Let now $\tilde{\gamma}(u)=(\xi_1(u),\xi_2(u))$ be a curve in ${\mathcal B_r}$, parametrized by
arc-length,  that  under the action of $G_{X}$ generates the
helicoidal surface. From \eqref{orbital}, it follows that 
\begin{equation}\label{sigma}
\xi_1'=B\,\cos \sigma,\qquad \xi_2'=\frac{\sqrt{\xi_1^2+(a\, B-\tau\,\xi_1^2)^2}\,\sin \sigma}{\xi_1}
\end{equation}
and the geodesic curvature of $\tilde{\gamma}$ takes the expression
\begin{equation}
\begin{aligned}\label{kg}
k_g=&\frac{(\widetilde{g}_{22})_{\xi_1} \xi_2'-(\widetilde{g}_{11})_{\xi_2} \xi_1' }{2\sqrt{\widetilde{g}_{11}\widetilde{g}_{22}}}+\sigma'\\
&=\frac{B\, [\xi_1^2+(a\, B-\tau\,\xi_1^2)^2]\,(\widetilde{g}_{22})_{\xi_1}}{2\,\xi_1^2}\,\sin\sigma+\sigma',
\end{aligned}
\end{equation}
where $\sigma$ is the angle that
$\tilde{\gamma}$ makes with the $\frac{\partial}{\partial \xi_1}$ direction. Also, as 
$$\textbf{n}=\bigg(-B\,\sin \sigma,\frac{\sqrt{\xi_1^2+(a\, B-\tau\,\xi_1^2)^2}}{\xi_1}\cos\sigma \bigg),$$ the normal derivative is given by
$$D_{\textbf{n}}\ln \omega=\frac{4 \,\xi_1\, [8a \tau-4+ \xi_1^2\,(\kappa + 2 a \kappa\,\tau - 8\tau^2)]\,\sin\sigma}{
   16\,\xi_1^2 + [a\,(4 + \kappa\,\xi_1^2) - 4\tau\,\xi_1^2]^2}$$
and thus we obtain that the mean curvature is given by
\begin{equation}\label{h}
H=\sigma'+\bigg(\frac{1}{\xi_1}-\frac{\kappa}{4}\xi_1\bigg)\sin\sigma.
\end{equation}

\begin{prop}
A helicoidal surface $\Psi(u,t):= \psi_{[U,m,a]}(u,t)$ has constant mean curvature $H$ if and only if $U(u)$ satisfies the differential equation
\begin{equation}\label{eq-principal}
H\,\sqrt{\frac{4\,(m^2\,U^2-a^2)}{(1+\sqrt{\Delta})^2-4\tau^2m^2U^2}-\frac{m^4\,B^2\, U^2\,U'^2}{\Delta}}
=
2-B-m^2\,B\,\Big(\frac{U\,U'}{\sqrt{\Delta}}\bigg)',
\end{equation}
where
\begin{equation}\label{dB}
\Delta=(1-2a\,\tau)^2+(m^2\,U^2-a^2)(4\tau^2-\kappa)\quad \text{and} \quad B=2\frac{1-2 a \tau+\sqrt{\Delta}}{(1+\sqrt{\Delta})^2-4\tau^2m^2 U^2}.\end{equation}
\end{prop}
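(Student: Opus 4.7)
The plan is to take the mean curvature formula \eqref{h} and rewrite it entirely in terms of the Bour data $U, U', m, a, \kappa, \tau$ by invoking the three identities \eqref{xi1}, \eqref{radice-delta} and \eqref{x1B} already established in the proof of Theorem~\ref{bour}. Requiring $H$ to be constant will then turn \eqref{h} into the ODE \eqref{eq-principal}.

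First I would simplify the coefficient of $\sin\sigma$ in \eqref{h}. Since $B=1+(\kappa/4)\,\xi_1^{2}$, one has $\frac{1}{\xi_1}-\frac{\kappa}{4}\,\xi_1=\frac{2-B}{\xi_1}$, so multiplying \eqref{h} through by $\xi_1\sin\sigma$ gives
\begin{equation*}
H\,\xi_1\sin\sigma \;=\; \xi_1\sin\sigma\cdot\sigma' \;+\; (2-B)\,\sin^{2}\sigma.
\end{equation*}
The relation $B\cos\sigma=\xi_1'$ from \eqref{sigma} combined with \eqref{x1B} yields
\begin{equation*}
\xi_1\sin\sigma \;=\; \sqrt{\,\xi_1^{2}-\frac{m^{4}B^{2}U^{2}U'^{2}}{\Delta}\,},
\end{equation*}
which, after substituting \eqref{xi1}, is exactly the expression under the radical on the left-hand side of \eqref{eq-principal}.

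The central computation is to rewrite $\xi_1\sin\sigma\cdot\sigma'$. Differentiating $\cos\sigma=\xi_1'/B$ and multiplying by $\xi_1$ gives, after a short rearrangement,
\begin{equation*}
\xi_1\sin\sigma\cdot\sigma' \;=\; -\left(\frac{\xi_1\xi_1'}{B}\right)' + \frac{(\xi_1')^{2}}{B}.
\end{equation*}
Using \eqref{radice-delta} in the form $\xi_1\xi_1'/B = m^{2}B\,UU'/\sqrt{\Delta}$ together with $(\xi_1')^{2}/B=B\cos^{2}\sigma$, the product rule then produces
\begin{equation*}
\xi_1\sin\sigma\cdot\sigma' \;=\; -\,m^{2}\,\frac{B'\,UU'}{\sqrt{\Delta}} \;-\; m^{2}B\left(\frac{UU'}{\sqrt{\Delta}}\right)' \;+\; B\cos^{2}\sigma.
\end{equation*}

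The step I expect to be most delicate is verifying the cancellation of the $m^{2}B'UU'/\sqrt{\Delta}$ term. Assembling everything and using $(2-B)\sin^{2}\sigma=(2-B)-(2-B)\cos^{2}\sigma$ gives
\begin{equation*}
H\,\xi_1\sin\sigma \;=\; (2-B) \;-\; m^{2}B\left(\frac{UU'}{\sqrt{\Delta}}\right)' \;+\; (2B-2)\cos^{2}\sigma \;-\; m^{2}\,\frac{B'UU'}{\sqrt{\Delta}}.
\end{equation*}
By \eqref{x1B} one has $(2B-2)\cos^{2}\sigma=\tfrac{\kappa\,\xi_1^{2}}{2}\cdot\tfrac{m^{4}B^{2}U^{2}U'^{2}}{\xi_1^{2}\Delta}=\tfrac{\kappa\,m^{4}B^{2}U^{2}U'^{2}}{2\Delta}$, while differentiating $B=1+(\kappa/4)\xi_1^{2}$ and using \eqref{radice-delta} again yields $B'=\kappa\,m^{2}B^{2}UU'/(2\sqrt{\Delta})$, so that $m^{2}B'UU'/\sqrt{\Delta}$ also equals $\tfrac{\kappa\,m^{4}B^{2}U^{2}U'^{2}}{2\Delta}$. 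The two contributions therefore cancel exactly, leaving
\begin{equation*}
H\,\xi_1\sin\sigma \;=\; (2-B) \;-\; m^{2}B\left(\frac{UU'}{\sqrt{\Delta}}\right)',
\end{equation*}
which, after replacing $\xi_1^{2}$ via \eqref{xi1} and $B$ via the expression in \eqref{dB}, is exactly \eqref{eq-principal}. All manipulations are reversible, so the equation characterizes the condition that $H$ be constant.
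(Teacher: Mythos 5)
Your proof is correct and follows essentially the same route as the paper: both start from the reduced mean-curvature formula \eqref{h}, use $\cos\sigma=\xi_1'/B$ together with \eqref{xi1}, \eqref{radice-delta} and \eqref{x1B}, and reduce everything to the Bour data. The only difference is organizational — the paper expands $\xi_1\xi_1''$ by differentiating \eqref{radice-delta} and regroups into $(UU'/\sqrt{\Delta})'$ at the end, whereas you keep the total derivative $(\xi_1\xi_1'/B)'=m^2(BUU'/\sqrt{\Delta})'$ packaged and verify the cancellation of the leftover $B'$ and $\cos^2\sigma$ terms directly, which is a slightly cleaner bookkeeping of the same computation.
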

\begin{proof}
If we consider a helicoidal surface $\Psi(u,t)$ of the Bour's family, from \eqref{sigma}  we can write  equation~\eqref{h} as
 $$
H=-\frac{\big(\frac{\xi_1'}{B}\big)'}{\sqrt{1-\big(\frac{\xi_1'}{B}\big)^2}}+\bigg(\frac{1}{\xi_1}-\frac{\kappa}{4}\xi_1\bigg)\,\sqrt{1-\Big(\frac{\xi_1'}{B}\Big)^2}
$$
and therefore, using \eqref{x1B} we get
\begin{equation}\label{hbis}
\begin{aligned}
&-\Big(\frac{\xi_1'}{B}\Big)'+\bigg(\frac{1}{\xi_1}-\frac{\kappa}{4}\xi_1\bigg)\,\Big[1-\Big(\frac{\xi_1'}{B}\Big)^2\Big]=H\sqrt{1-\Big(\frac{\xi_1'}{B}\Big)^2}\\&=\frac{H}{\xi_1}\sqrt{\xi_1^2-\frac{m^4\,B^2\, U^2\,U'^2}{\Delta}}.
\end{aligned}
\end{equation}
Then, 
\begin{equation}\label{radice-Q}
\begin{aligned}
&H\,\sqrt{\xi_1^2-\frac{m^4\,B^2\, U^2\,U'^2}{\Delta}}\\&=
\xi_1\,\bigg[-\Big(\frac{\xi_1'}{B}\Big)'+\bigg(\frac{1}{\xi_1}-\frac{\kappa}{4}\xi_1\bigg)\,\Big(1-\Big(\frac{\xi_1'}{B}\Big)^2\Big)\bigg]\\&=
1-\Big(\frac{\xi_1'}{B}\Big)^2-\frac{\xi_1\,\xi_1''}{B}+\frac{3\kappa\,\xi_1^2}{4}\Big(\frac{\xi_1'}{B}\Big)^2-\frac{k}{4}\xi_1^2.
\end{aligned}
\end{equation}
Now, deriving \eqref{radice-delta} we get
$$\xi_1\,\xi_1''=\frac{m^2\,B^2}{\sqrt{\Delta}}\,(U'^2+U\,U'')+\frac{m^4\,B^2}{\Delta}\bigg(\kappa\,B+\frac{\kappa-4\tau^2}{\sqrt{\Delta}}\bigg)\,U^2\,U'^2-\xi_1'^2$$
and hence, taking into account \eqref{xi1}, \eqref{radice-delta} and \eqref{x1B}, we can write equation~\eqref{radice-Q} as
$$\begin{aligned}
&H\,\sqrt{\xi_1^2-\frac{m^4\,B^2\, U^2\,U'^2}{\Delta}}
\\&=
2-B-\frac{m^2\,B}{\sqrt{\Delta}}\,(U'^2+U\,U'')+\frac{m^4\,B}{\Delta}\,\bigg(\frac{4\tau^2-\kappa}{\sqrt{\Delta}}-\frac{\kappa\,B}{4}+\frac{B^2-B}{\xi_1^2}\bigg)U^2\,U'^2
\\&=
2-B-\frac{m^2\,B}{\sqrt{\Delta}}\,(U'^2+U\,U'')+\frac{(4\tau^2-\kappa)\,m^4\,B}{\Delta^{3/2}}\,U^2\,U'^2.
\end{aligned}
$$

\end{proof}

\subsection{The solution of the mean curvature equation}
Next, we will give a description of the helicoidal surfaces in $\Nkt$ with constant mean curvature $H$. For this purpose, we assume that the helicoidal surfaces are parametrized by natural coordinates $(u,t)$ and we determine explicitly the expression of the function $U(u)$, that gives the metric, by integrating~\eqref{eq-principal}.
\begin{thm}\label{principal}
In the BCV-space $\Nkt$ the helicoidal surface $\Psi(u,t):=(\xi_1(u),\theta(u,t),\xi_2(u)+a\,\theta(u,t))$  with $\xi_1(u), \xi_2(u)$ and $\theta(u,t)$ given by \eqref{xi-bis} and \eqref{theta-bis}, has constant mean curvature $H$ if and only if
$U(u)$ is given by:

\begin{enumerate}
\item if $\kappa=\tau=H=0$, $$ U^2(u)=\frac{u^2+a^2+c^2/4}{m^2};$$
\item if $\kappa=4\tau^2\neq -H^2$,
$$U^2(u)=\frac{c_1+
\sqrt{c_1^2+c_2\, (H^2+4\tau^2)}\,\sin (\sqrt{4\tau^2+H^2}\,u)}{m^2(H^2+4\tau^2)};$$
\item  if $-H^2=\kappa\neq 4\tau^2$,
$$U^2(u)=\frac{\Big(\dfrac{b_1}{2}u^2+b_2\Big)^2+b_3}{m^2\,(4\tau^2+H^2)};$$
\item if $-H^2<\kappa\neq 4\tau^2$,
$$U^2(u)=\frac{(H^2+\kappa)^2\,b_3+
\Big[b_1+\sqrt{b_1^2+b\,(H^2+\kappa)}\,\sin(\sqrt{H^2+\kappa}\, u)\Big]^2}{m^2(4\tau^2-\kappa)(H^2+\kappa)^2};$$

\item if $-H^2>\kappa\neq 4\tau^2$,
$$
U^2(u)=\left\{\begin{aligned}
&\frac{(H^2+\kappa)^2\, b_3+
\Big[b_1-\sqrt{-b_1^2-b\,(H^2+\kappa)}\,\sinh(\sqrt{-(H^2+\kappa)}\, u)\Big]^2}{m^2(4\tau^2-\kappa)(H^2+\kappa)^2}, \quad b_1^2+b\,(H^2+\kappa)<0,\\ 
& \frac{(H^2+\kappa)^2\, b_3+
\Big[b_1-\sqrt{b_1^2+b\,(H^2+\kappa)}\,\cosh(\sqrt{-(H^2+\kappa)}\, u)\Big]^2}{m^2(4\tau^2-\kappa)(H^2+\kappa)^2},\qquad b_1^2+b\,(H^2+\kappa)>0,
\end{aligned}
\right.
$$
where  $b, b_i, c_i\in\r$, are the constants given by
$$\begin{aligned}
b&=(1-2a\tau)\,[\kappa\,(1+2 a \tau)-8\tau^2]-c^2, \quad & b_1&=4\tau^2-2a\kappa\tau-c \,H, \\ 
b_2&=-\frac{b}{2\,b_1},\qquad & b_3&=4a\tau-a^2\kappa-1, &\\
c_1&= 1+ (1-2a\,\tau)^2-c\, H,  \quad & c_2&=-c^2-4 a^2\,(1-a\,\tau).                                              
\end{aligned}$$
\end{enumerate}
These expressions define a $1$-parameter family $\{U_c(u)\}$ of functions $U(u)$ such that the helicoidal surface $\psi_{[U_c,m,a]}(u,t)$ has constant mean curvature and equal to $H$.
\end{thm}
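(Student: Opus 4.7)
To prove the theorem one must integrate the ODE \eqref{eq-principal} of the Proposition. Rather than attacking \eqref{eq-principal} head on, I would step back to the mean-curvature expression \eqref{h}, $H = \sigma' + (1/\xi_1 - \kappa\xi_1/4)\sin\sigma$. Combining it with $\xi_1' = B\cos\sigma$ from \eqref{sigma} and using $\sigma' = B\,d(\sin\sigma)/d\xi_1$ recasts \eqref{h} as a first-order linear ODE for $\sin\sigma$ as a function of $\xi_1$:
$$B\,\frac{d(\sin\sigma)}{d\xi_1} + \Big(\frac{1}{\xi_1} - \frac{\kappa\,\xi_1}{4}\Big)\sin\sigma = H.$$
An integrating factor $\xi_1/B$ (verified from $(\xi_1/B)' = (1-\kappa\xi_1^2/4)/B^2$) gives the first integral
$$\frac{\xi_1 \sin\sigma}{B} = H\int\frac{\xi_1}{B^2}\,d\xi_1 + c,$$
with the integral equal to $\xi_1^2/2$ when $\kappa = 0$ and $-2/(\kappa B)$ when $\kappa\ne 0$. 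This $c$ is the one-parameter family's constant appearing in the theorem.

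Using $\xi_1'^2 = B^2(1-\sin^2\sigma)$ and setting $W = \xi_1^2$, one obtains the autonomous first-order identity $(W')^2 = 4B^2[W - (\xi_1\sin\sigma)^2]$, polynomial in $W$. The decisive step is the change of dependent variable to $Z(u) = \sqrt{\Delta(u)}$ when $\kappa \ne 4\tau^2$ (and directly to $V = m^2 U^2$ when $\kappa = 4\tau^2$, since there $\Delta = (1-2a\tau)^2$ is constant and the $Z$-substitution degenerates). Two identities do the heavy lifting: from the definition of $\Delta$ one has $\Delta' = 2(4\tau^2-\kappa)\,m^2\, UU'$, so that $(UU'/\sqrt\Delta)' = Z''/(m^2(4\tau^2-\kappa))$; and from \eqref{dB} a short manipulation yields the clean relation
$$\xi_1^2 = \frac{2B(Z - 1 + 2a\tau)}{4\tau^2-\kappa}.$$
Substituting these into $(W')^2 = 4B^2[W - (\xi_1\sin\sigma)^2]$ and simplifying should collapse the nominally degree-four polynomial identity into a quadratic identity
$$(Z')^2 = -(H^2+\kappa)\,Z^2 + 2\beta\,Z + \gamma,$$
with $\beta, \gamma$ explicit polynomials in $a, \tau, \kappa, H, c$ matching, after a cosmetic rescaling of $c$, the constants $b, b_i$ of the theorem. (In the exceptional case $\kappa = 4\tau^2$ the same scheme applied directly to $V$ produces $(V')^2 = -(H^2+4\tau^2)V^2 + 2c_1 V + c_0$.)

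Differentiating once gives the harmonic-oscillator equation
$$Z'' + (H^2+\kappa)\,Z = \beta \qquad (\text{resp.~} V'' + (H^2+4\tau^2)V = c_1 \text{ when } \kappa = 4\tau^2),$$
whose general solution splits according to the sign of $H^2+\kappa$: sinusoidal when positive (cases 2 and 4), a quadratic polynomial in $u$ when zero (case 3, with case 1 as the further specialization $\kappa=\tau=H=0$), and $\sinh/\cosh$ when negative (case 5, the two branches selected by the sign of $b_1^2 + b(H^2+\kappa)$ exactly as in the statement). A translation of $u$ absorbs the second integration constant, leaving only $c$. Inverting $Z^2 = \Delta$ via $V = a^2 + (Z^2 - (1-2a\tau)^2)/(4\tau^2-\kappa)$ and dividing by $m^2$ delivers the five displayed formulas for $U^2(u)$. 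The main obstacle is the algebraic reduction from the degree-four identity for $(W')^2$ down to the quadratic identity for $(Z')^2$: the many higher-order terms coming from the $B^2$ factor must conspire to cancel, and this cancellation is precisely what makes the eigenfrequency $\sqrt{H^2+\kappa}$ of the resulting oscillator the natural one.
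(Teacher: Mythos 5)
Your proposal is correct and, at bottom, follows the same strategy as the paper: produce a first integral of the CMC equation containing the constant $c$, reduce to a first--order ODE of the form $w'=\sqrt{\text{quadratic}(w)}$ in the variable $\sqrt{\Delta}$ (resp.\ $m^2U^2$ in the degenerate case $\kappa=4\tau^2$, where $\Delta$ is constant), and integrate case by case according to the sign of $H^2+\kappa$. The difference lies in how the first integral is reached. You apply the integrating factor $\xi_1/B$ to the linear ODE for $\sin\sigma$ as a function of $\xi_1$ coming from \eqref{h} and \eqref{sigma}; the paper instead performs the substitution \eqref{tc} on \eqref{eq-principal} and observes that $y'=Hxx'/\sqrt{\Delta}$, whence \eqref{solution}. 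These are literally the same conservation law: one checks from \eqref{tc}, \eqref{xi1} and \eqref{x1B} that the paper's auxiliary function is $y=\xi_1\sin\sigma/B$, so your quantity $\xi_1\sin\sigma/B-H\int\xi_1 B^{-2}\,d\xi_1$ is the paper's constant $c$ up to an affine renormalization. Your derivation is arguably more transparent (it exhibits the first integral as the classical flux/holonomy invariant of equivariant CMC theory), while the paper's is shorter because it never leaves the $(U,\Delta)$ variables. The one place where you take on more work than necessary is the reduction step: passing through $(W')^2=4B^2[W-(\xi_1\sin\sigma)^2]$ with $W=\xi_1^2$ and then converting to $Z=\sqrt{\Delta}$ forces you to confront the quartic expression you describe as the ``main obstacle,'' whereas the paper substitutes $x^2-a^2=(\Delta-(1-2a\tau)^2)/(4\tau^2-\kappa)$ and $xx'/\sqrt{\Delta}=(\sqrt{\Delta})'/(4\tau^2-\kappa)$ directly into $y^2$ from \eqref{tc} and lands on \eqref{eq-d-delta} with only a quadratic amount of algebra. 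That cancellation is asserted but not carried out in your proposal; it does hold (the paper's computation confirms it), but it is the one step you would still need to execute, together with matching your $\beta,\gamma$ to the stated $b,b_1$ and your rescaled $c$ to the theorem's. The case analysis, the role of the translation of $u$, and the inversion $m^2U^2=a^2+(\Delta-(1-2a\tau)^2)/(4\tau^2-\kappa)$ all agree with the paper.
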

\begin{proof}
Using the transformation of coordinates given by 
\begin{equation}\label{tc}
\left\{
\begin{aligned}
x(u)&=m\,U(u),\\
 y(u)&=\sqrt{\frac{(x^2(u)-a^2)\,\,[(1+\sqrt{\Delta(u)})^2-4\tau^2\,x^2(u)]}{[1-2 a \tau+\sqrt{\Delta(u)}]^2}-\frac{x(u)^2\,x'(u)^2}{\Delta(u)}},
 \end{aligned}
 \right.
\end{equation}
where 
$$
\Delta(u)=(1-2a\,\tau)^2+(4\tau^2-\kappa)(x^2(u)-a^2),$$
equation~\eqref{eq-principal} becomes
\begin{equation}\label{aux}
H\,y(u)=\frac{2}{B(u)}-1-\Bigg(\frac{x(u)\,x'(u)}{\sqrt{\Delta(u)}}\Bigg)',
\end{equation}
with
$$B(u)=2\frac{1-2 a \tau+\sqrt{\Delta(u)}}{(1+\sqrt{\Delta(u)})^2-4\tau^2\,x^2(u)}.$$
Therefore,
$$y'(u)=\frac{x(u)\,x'(u)}{y(u)\,\sqrt{\Delta(u)}}\bigg[\frac{2}{B(u)}-1-\Bigg(\frac{x(u)\,x'(u)}{\sqrt{\Delta(u)}}\Bigg)'\bigg]=\frac{H\,x(u)\,x'(u)}{\sqrt{\Delta(u)}}$$
and thus
\begin{equation}\label{solution}
y(u)=\left\{
\begin{aligned}
& \frac{H\,x^2(u)+c}{2\,\sqrt{\Delta(u)}}, \quad \qquad\;\; 4\tau^2-\kappa= 0,\\
&\frac{H\sqrt{\Delta(u)}+c}{4\tau^2-\kappa},\qquad \quad 4\tau^2-\kappa\neq 0,
\end{aligned}
\right.
\end{equation}
where  $c$ is an arbitrary constant. Consequently, we have the following cases:

\subsection*{Case $4\tau^2-\kappa= 0$}
From \eqref{tc} and \eqref{solution}, if we suppose that $1-2a\,\tau>0$, then we have
$$
\big(2\, x(u)\, x'(u)\big)^2  =
4 \big[(1-2a\,\tau) -\tau^2\,(x^2(u)-a^2)\big]\,(x^2(u)-a^2)-(H\, x^2(u)+c)^2.
$$
Putting $z=x^2$ the above expression is transformed into the following
\begin{equation}\label{eq-z}z'(u)=\sqrt{-(H^2+4\tau^2)\,z^2(u)+2c_1\,z(u)+c_2},
\end{equation}
where $$c_1= 1+ (1-2a\,\tau)^2-c\, H,  \qquad c_2=-c^2-4 a^2\,(1-a\,\tau).$$
Consequently, 

\begin{itemize}
\item[i)] If $H^2+4\tau^2=0$, we have that the BCV-space is $\r^3$ and $c_1=2$, $c_2=-c^2-4a^2$. Thus, adjusting the origin of $u$, we get 
$$\sqrt{z+\frac{c_2}{4}}=u$$
and since $z=m^2\,U^2$ we have
\begin{equation}\label{U1}
U^2(u)=\frac{u^2+a^2+c^2/4}{m^2}.
\end{equation}
We observe that when $c=0$, from \eqref{r3} it follows that the helicoidal surface $\psi_{[U,m,a]}$ is a helicoid.
\item[ii)] If $H^2+4\tau^2\neq 0$, by integrating \eqref{eq-z} we have that
$$u=\frac{1}{\sqrt{H^2+4\tau^2}}\sin^{-1}\bigg(\frac{(H^2+4\tau^2)\,z-c_1}{\sqrt{c_1^2+c_2\,(H^2+4\tau^2)}}\bigg),$$
up to  a constant. Thus, as $z=m^2\,U^2$, it follows that
\begin{equation}\label{U2}\begin{aligned}
U^2(u)&=\frac{c_1+
\sqrt{c_1^2+c_2\, (H^2+4\tau^2)}\,\sin (\sqrt{4\tau^2+H^2}\,u)}{m^2(H^2+4\tau^2)}
\\&=\frac{1}{m^2(H^2+4\tau^2)}\,\bigg[1+ (1-2a\,\tau)^2-c\, H\\&+
2\sqrt{(1-2a\tau)^2-H\,(1-2a\tau) (Ha^2+c)-\tau^2\, (Ha^2+c)^2}\,\sin (\sqrt{H^2+4\tau^2}\,u)\bigg].
\end{aligned}
\end{equation}
\end{itemize}

\subsection*{Case $4\tau^2-\kappa\neq 0$}
In this case, as 
$$\begin{aligned} &x^2(u)=\frac{1+a^2\kappa-4a\,\tau-\Delta (u)}{\kappa-4\tau^2}, \\
 &\frac{x(u)\,x'(u)}{\sqrt{\Delta(u)}}=\frac{(\sqrt{\Delta(u)})'}{4\tau^2-\kappa},
 \end{aligned}$$ 
 from \eqref{tc} and \eqref{solution} we get
$$\begin{aligned}
&(H\,\sqrt{\Delta(u)}+c)^2=(4\tau^2-\kappa)^2\,y^2(u)\\
&=(1-2a\tau-\sqrt{\Delta(u)})\,(\kappa+\kappa\sqrt{\Delta(u)}+2 a \kappa\tau-8\tau^2)-\Big((\sqrt{\Delta(u)})'\Big)^2.
\end{aligned}$$
Thus
\begin{equation}\label{eq-d-delta}
(\sqrt{\Delta(u)})'=\sqrt{-(H^2+\kappa)\,\Delta(u)+2\,b_1\sqrt{\Delta(u)}+b},
\end{equation}
where $$b=(1-2a\tau)\,(\kappa\,(1+2 a \tau)-8\tau^2)-c^2,\qquad b_1=4\tau^2-2a\kappa\tau-c \,H.$$
In the sequel we integrate equation~\eqref{eq-d-delta}, up to a change of the origin of $u$, considering the following possibilities:
\begin{itemize}
\item[i)]  If  $H^2+\kappa=0$, then we obtain that
$$\sqrt{\Delta(u)}=\frac{b_1}{2}u^2+b_2,$$
with
$$b_1=2a\tau \,H^2+4\tau^2-c\,H,\qquad b_2=-\frac{b}{2\,b_1}.$$
Then, substituting in the first equation of \eqref{dB}, we get
\begin{equation}\label{U3}
\begin{aligned}
U^2(u)&=\frac{1}{m^2}\bigg(\frac{\Delta(u)-(1-2a\tau)^2}{4\tau^2+H^2}+a^2\bigg)\\
&=\frac{\Big(\dfrac{b_1}{2}u^2+b_2\Big)^2+a^2H^2+4a\tau-1}{m^2\,(4\tau^2+H^2)}.
\end{aligned}
\end{equation}
\item[ii)] If $H^2+\kappa >0$, then the integration of \eqref{eq-d-delta} gives
$$\sqrt{\Delta(u)}=\frac{1}{H^2+\kappa}\,\Big[b_1+
\sqrt{b_1^2+b\,(H^2+\kappa)}\,\sin(\sqrt{H^2+\kappa}\, u)\Big].$$
Therefore, substituting in the first equation of \eqref{dB}, we obtain 
\begin{equation}\label{U4}
U^2(u)=\frac{(H^2+\kappa)^2(4a\tau-a^2\kappa-1)+
\Big[b_1+\sqrt{b_1^2+b\,(H^2+\kappa)}\,\sin(\sqrt{H^2+\kappa}\, u)\Big]^2}{m^2(4\tau^2-\kappa)(H^2+\kappa)^2}.
\end{equation}
\item[iii)] If $H^2+\kappa <0$, then the integration of \eqref{eq-d-delta} gives
$$\sqrt{\Delta(u)}=\left\{\begin{aligned}
&\frac{1}{H^2+\kappa}\,\Big[b_1-\sqrt{-b_1^2-b\,(H^2+\kappa)}\,\sinh(\sqrt{-(H^2+\kappa)}\, u)\Big],\quad b_1^2+b\,(H^2+\kappa)<0,\\
&\frac{1}{H^2+\kappa}\,\Big[b_1-\sqrt{b_1^2+b\,(H^2+\kappa)}\,\cosh(\sqrt{-(H^2+\kappa)}\, u)\Big],
\qquad b_1^2+b\,(H^2+\kappa)>0.
\end{aligned}
\right.
$$
Therefore, substituting in the first equation of \eqref{dB}, we obtain  
\begin{equation}\label{U5}
U^2(u)=\left\{\begin{aligned}
&\frac{(H^2+\kappa)^2\, b_3+
\Big[b_1-\sqrt{-b_1^2-b\,(H^2+\kappa)}\,\sinh(\sqrt{-(H^2+\kappa)}\, u)\Big]^2}{m^2(4\tau^2-\kappa)(H^2+\kappa)^2}, \quad b_1^2+b\,(H^2+\kappa)<0,\\ 
& \frac{(H^2+\kappa)^2\, b_3+
\Big[b_1-\sqrt{b_1^2+b\,(H^2+\kappa)}\,\cosh(\sqrt{-(H^2+\kappa)}\, u)\Big]^2}{m^2(4\tau^2-\kappa)(H^2+\kappa)^2},\qquad b_1^2+b\,(H^2+\kappa)>0.
\end{aligned}
\right.
\end{equation}
\end{itemize}
\end{proof}
\begin{rem}
In particular, if we consider $m=1$ and $a=a_0$ in the expressions of $U(u)$ obtained in Theorem~\ref{principal}, we see that an arbitrary helicoidal surface in $\Nkt$ has constant mean curvature $H$ if and only if the functions $\xi_1(u), \xi_2(u)$ and $\theta(u,t)$ are given by \eqref{xi-bis} and \eqref{theta-bis} by substituting  the corresponding function $U(u)$.
\end{rem}
\begin{rem}
Putting $\kappa=\tau=0$ in the equations~\eqref{U1} and \eqref{U2} we obtain the following expressions:
$$U^2(u)=\left\{
\begin{aligned}
&\frac{u^2+a^2+c^2/4}{m^2},\qquad \text{if}\quad H=0,\\
&\frac{2-c\,H+2\sqrt{1-c\,H-a^2\,H^2}\sin{(H\,u)}}{m^2\,H^2},\quad \text{if}\quad H\neq 0,
\end{aligned}
\right.$$
the second of which was given by Do Carmo and Dajczer in \cite{DD}.
\end{rem}
In regard to the helicoidal minimal surfaces we have the following result:
\begin{cor}\label{coro}
In  each BCV-space $\Nkt$, the helicoidal minimal surfaces $\Psi(u,t):=(\xi_1(u),\theta(u,t),\xi_2(u)+a\,\theta(u,t))$, where $\xi_1(u), \xi_2(u)$ and $\theta(u,t)$ are given by \eqref{xi-bis} and \eqref{theta-bis},  are determined 
by the following functions $U(u)$:
\begin{enumerate}
\item If $\kappa=\tau=0$, then $\Nkt\cong \r^3$ and 
\begin{equation}\label{u-minimal}
 U^2(u)=\frac{u^2+a^2+c^2/4}{m^2};
 \end{equation}
\item if $\kappa=4\tau^2\neq 0$, then $\Nkt\cong \S^3(\tau^2)$ and 
$$U^2(u)=\frac{1-a\,\tau+\sqrt{(1-2a\tau)^2-\tau^2\,c^2}\sin(2\tau u)}{4 m^2\tau^2},$$
with $|c|<|1/\tau-2a|$;
\item if $\kappa >0$ and $\tau=0$, then $\Nkt\cong \S^2(\kappa)\times\r$ and
$$U^2(u)=\frac{\kappa\,(a^2\kappa+1)+(c^2-\kappa)\,\sin^2(\sqrt{\kappa}\, u)}{m^2\,\kappa^2},\qquad |c|<\sqrt{\kappa};$$
\item if $\kappa <0$ and $\tau=0$, then $\Nkt\cong \h^2(\kappa)\times\r$
and
$$U^2(u)=\frac{\kappa\,(a^2\kappa+1)+(c^2-\kappa)\,\cosh^2(\sqrt{-\kappa}\, u)}{m^2\,\kappa^2};$$
\item if $\kappa =0$ and $\tau\neq 0$, then $\Nkt\cong \h_3$ and
\begin{equation}\label{u-minimal-2}
U^2(u)=\frac{\Big(2\,\tau^2 u^2+1-2a\tau+\dfrac{c^2}{8\tau^2}\Big)^2+4a\,\tau -1}{4m^2\,\tau^2},
\end{equation}
\item if $\kappa >0$ and $\tau\neq 0$, then $\Nkt\cong\mathrm{SU}(2)$ and
$$U^2(u)=\frac{\kappa^2\,(4a\tau-a^2\kappa-1)+\Big[4\tau^2-2 a \kappa\tau+\sqrt{(4\tau^2-\kappa)^2-c^2\kappa}\,\sin(\sqrt{\kappa}\,u)\Big]^2}{m^2\kappa^2\,(4\tau^2-\kappa)},$$
with $|c|<|4\tau^2-\kappa|/\sqrt{k}$;
\item if $\kappa <0$ and $\tau\neq 0$, then $\Nkt\cong\widetilde{\mathrm{SL}(2, \r)}$ and
$$U^2(u)=\frac{\kappa^2\,(4a\tau-a^2\kappa-1)+\Big[4\tau^2-2 a \kappa\tau-\sqrt{(4\tau^2-\kappa)^2-c^2\kappa}\,\cosh(\sqrt{-\kappa}\,u)\Big]^2}{m^2\kappa^2\,(4\tau^2-\kappa)}.$$
\end{enumerate}
\end{cor}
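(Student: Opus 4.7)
The corollary is the specialization of Theorem~\ref{principal} to the minimal case $H=0$. Since the seven sub-cases of the corollary exhaust the isometry classes of BCV-spaces, the plan is to proceed case by case, matching each one to the appropriate branch of Theorem~\ref{principal}.

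First I would identify the correspondence. Item~(1) of the corollary ($\r^3$) matches item~(1) of the theorem. Item~(2) ($\S^3(\tau^2)$, $\kappa=4\tau^2\neq 0$) matches item~(2) of the theorem with $H=0$. Item~(5) ($\h_3$, $\kappa=0$, $\tau\neq 0$) matches item~(3) of the theorem since $-H^2=\kappa=0\neq 4\tau^2$. Items~(3) and~(6) (the $\kappa>0$ cases $\S^2(\kappa)\times\r$ and $\mathrm{SU}(2)$) fall under item~(4) of the theorem since $-H^2=0<\kappa$. Items~(4) and~(7) (the $\kappa<0$ cases $\h^2(\kappa)\times\r$ and $\widetilde{\mathrm{SL}(2,\r)}$) fall under item~(5) of the theorem since $-H^2=0>\kappa$.

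For each case I would substitute $H=0$ in the auxiliary constants $b,b_1,b_2,b_3,c_1,c_2$ defined at the end of Theorem~\ref{principal}, insert the results into the corresponding formula for $U^2(u)$, and carry out the algebraic simplification. A specific subtlety arises in items~(4) and~(7), where case~(5) of the theorem contains two sub-branches depending on the sign of $b_1^2+b(H^2+\kappa)$; setting $H=0$ and simplifying one finds that this quantity equals $(4\tau^2-\kappa)^2-c^2\kappa$, which for $\kappa<0$ is automatically positive, so the $\cosh$ branch is selected and yields the expressions stated. For item~(4), where additionally $\tau=0$ and $b_1=0$, expanding the square $(b_1-\sqrt{\cdots}\cosh(\sqrt{-\kappa}\,u))^2$ and absorbing constants produces the $\cosh^2$ form shown. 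Items~(1), (2), (5) are settled purely by substitution, while items~(3) and~(6) come from the $\sin$-form in theorem item~(4).

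The main obstacle I expect is the bookkeeping. To present the answer in the compact form of the corollary, the arbitrary constant $c$ from Theorem~\ref{principal} must be renamed or rescaled appropriately, and whenever the specialization produces an expression of the type $A+B\sin(\omega u)+C\cos(\omega u)$ one must collapse it to $A'+B'\sin(\omega u)$ by a suitable shift of the origin of $u$. Finally, the reality constraints on $c$ (such as $|c|<|1/\tau-2a|$ in item~(2) or $|c|<\sqrt{\kappa}$ in item~(3)) should emerge from the requirement that the expression under the square root in the theorem remain nonnegative.
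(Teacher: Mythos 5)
Your proposal is correct and coincides with the paper's (implicit) argument: the corollary is stated without a separate proof precisely because it is the specialization $H=0$ of Theorem~\ref{principal}, with the case-matching and the sign analysis of $b_1^2+b(H^2+\kappa)=(4\tau^2-\kappa)^2-c^2\kappa$ exactly as you describe. The only caveat is bookkeeping, which you already flag: for instance, direct substitution of $H=0$ into case (2) of the theorem yields the numerator $1+(1-2a\tau)^2+2\sqrt{(1-2a\tau)^2-\tau^2c^2}\,\sin(2\tau u)$, which matches the printed corollary only up to a renormalization of constants.
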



\begin{ex}[General helicoidal minimal surface in $\r^3$]\label{ex3}
In $\r^3$ we consider the function \eqref{u-minimal} for $m=1$ replacing,  for semplicity,  the constant $c/2$ by $c$.  From the formulas~\eqref{r3}
we obtain that the natural parametrization of a general helicoidal minimal surface in the euclidean space
 is given by
\begin{equation}\label{scherk}
\left\{\begin{aligned}
\xi_1(u)&=\sqrt{u^2+c^2},\\
\xi_2(u)&=c\,\cosh^{-1} \Big(\sqrt{\frac{u^2}{a^2+c^2}+1}\Big)+ 
   a\,\arctan \Big(\frac{a\, u}{c\,
           \sqrt{u^2+a^2+c^2}}\Big),
           \\      \theta(u,t)&=t-\arctan \Big(\frac{a\, u}{c\,
       \sqrt{u^2+a^2+c^2}}\Big).
           \end{aligned}
\right.
\end{equation}
This family of surfaces  called second Scherk's surfaces\footnote{In 1835 H.F.~Scherk made an important contribution to  minimal surfaces theory with his work \cite{S} that contains the first examples of minimal surfaces obtained from the integral of Monge and Legendre. Also, he investigated   minimality of surfaces given as graphs $z=z(r,\theta)$ (where $(r,\theta,z)$ are cylindrical coordinates in $\r^3$)  satisfying  the condition $\frac{\partial^2 z}{\partial r\partial\theta}=0$, and determined all the helicoidal minimal surfaces. Detailed accounts and further information can be found in \cite{clapier}, on p. ~60, and in \cite{da}, on p. ~327.}, 
includes the catenoid and the helicoid, that correspond to the cases $a=0$ and $c=0$, respectively. We observe that all the surfaces of the family for which the sum $a^2+c^2$ is the same are isometric to each other. Also, from Example~\ref{ex1},  it follows that every helicoidal minimal surface in $\r^3$ belongs to one of the families of isometric surfaces obtained deforming catenoids into helicoids.
\end{ex}

\begin{rem}\label{rem-b}
Among the helicoidal surfaces in $\h_3$ obtained in   Example~\ref{ex-2} the only minimal surface is the helicoidal catenoid because the function $U(u)=(u^2+2)/2$ can be obtained just choosing in \eqref{u-minimal-2} $c^2=1=m$ and $a=1/2$.
\end{rem}

\end{document}